\documentclass[10pt]{article}
\usepackage{tikz}
\usetikzlibrary{arrows.meta, positioning}

\usepackage[margin=1in]{geometry}
\usepackage{amsmath,amsfonts,amssymb,mathrsfs,nccmath,amscd}
\usepackage{enumerate,float,graphicx}

\usepackage{graphicx,wrapfig}
\usepackage{subcaption}
\usepackage[thinlines,thiklines]{easybmat}
\usepackage{hyperref}
\usepackage{capt-of}
\usepackage{float}
\usepackage{stackrel}
\usepackage{tikz-cd}
\usepackage{nicematrix,tikz}
\usetikzlibrary{tikzmark}
\usetikzlibrary{fit}

\usepackage{setspace}
\usepackage{tabu}
\usepackage{booktabs}

\usepackage{fancyhdr}
\newtheorem{theorem}{Theorem}[section]
\newtheorem{lemma}[theorem]{Lemma}
\newtheorem{remark}[theorem]{Remark}
\newtheorem{proposition}[theorem]{Proposition}
\newtheorem{corollary}[theorem]{Corollary}
\newtheorem{definition}[theorem]{Definition}
\newtheorem{example}[theorem]{Example}
\newenvironment{proof}{\begin{trivlist} \item[]{\em Proof.}}{\end{trivlist}}
\newcount\refno
\newcommand\be{\begin{equation}}
\newcommand\ee{\end{equation}}
\newcommand\bn{\begin{eqnarray}}
\newcommand\en{\end{eqnarray}}
\newcommand\bns{\begin{eqnarray*}}
\newcommand\ens{\end{eqnarray*}}
\newcommand\bd{\begin{definition}}
\newcommand\ed{\end{definition}}
\newcommand\br{\begin{remark}}
\newcommand\er{\end{remark}}
\newcommand\bt{\begin{theorem}}
\newcommand\et{\end{theorem}}
\newcommand\bp{\begin{proposition}}
\newcommand\ep{\end{proposition}}
\newcommand\bc{\begin{corollary}}
\newcommand\ec{\end{corollary}}
\newcommand\bl{\begin{lemma}}
\newcommand\el{\end{lemma}}
\newcommand\pf{\begin{proof}}
\newcommand\qed{\end{proof}\eop}

\newcommand\bR{{\mathbb R}}
\newcommand\bN{{\mathbb N}}

\newcommand\cD{{\cal D}}
\newcommand\cE{{\cal E}}

\newcommand\cR{{\cal R}}

\newcommand{\N}{\mbox{$\mathbb{N}$}}
\newcommand{\NS}{\mbox{\scriptsize${\mathbb{N}}$}}

\newcommand{\F}{\mbox{$\mathcal{F}$}}

\renewcommand{\L}{\mbox{$\mathcal{L}$}}

\def\eop{\hfill\rule{2.0mm}{2.0mm}}

\makeindex
\begin{document} 

\allowdisplaybreaks
\title{Riordan array representation of recursive polynomial sequences, orthogonal polynomial sequences, and $d$-orthogonal polynomial sequences}

\author{Tian-Xiao He}

\date{}

\maketitle
\setcounter{page}{1}
\pagestyle{myheadings}
\markboth{From T.-X. He}
{Riordan array representation of recursive polynomial sequences}

\begin{abstract}
We study when a polynomial sequence $\{p_n(x)\}$ satisfying a linear homogeneous recurrence 
with polynomial coefficients admits an ordinary Riordan array as its coefficient matrix. For second-order recurrences $p_{n+2}(x)=a(x)p_{n+1}(x)+b(x)p_n(x)$, we give a complete characterization: the coefficient matrix is a Riordan array $(g,f)$ if and only if $a(x),b(x)$ are affine in $x$, the initial term $p_0(x)$ is constant, and $p_1(x)-a(x)p_0(x)$ is constant, with $g,f$ given explicitly in terms of these constants. We extend this to a necessary condition and a full factorization criterion for recurrences of arbitrary order $\ell \geq 3$. As an application we recover, unify, and correct several known identities for the generalized Gegenbauer--Humbert polynomials $P_n^{\lambda,C,y}(x)$ and their special cases (Chebyshev, Legendre, Pell, Fibonacci, Fermat, and Dickson polynomials), including explicit Riordan array representations, row-sum and rising-diagonal-sum identities, and derivative evaluation formulas expressed via the fundamental theorem of Riordan arrays.

We then develop a production-matrix framework for arbitrary orthogonal polynomial sequences (OPS), not restricted to the Riordan-array-type case: for any lower-triangular invertible coefficient matrix $A$ of an OPS,  the production matrix $P_{A^{-1}}=ASA^{-1}$ is always the tridiagonal Jacobi matrix encoding the three-term recurrence, and the first column of $A^{-1}$ always gives the moment sequence -- even when, as for the Legendre polynomials, the recurrence coefficients are non-constant and no Riordan array representation of the coefficient matrix exists. We illustrate this with Legendre and Chebyshev polynomials as, respectively, non-Riordan-type and Riordan-type examples, and give an explicit closed-form coefficient matrix and its inverse for the generalized Gegenbauer--Humbert OPS. Finally, we extend the framework to $d$-orthogonal polynomial sequences, showing that the coefficient matrix of a $d$-orthogonal sequence is always invertible, that its associated production matrix is $(d+2)$-banded (lower Hessenberg) and encodes the corresponding $(d+2)$-term recurrence, and that the first $d$ columns of the inverse matrix recover the moment sequences of the defining vector functional -- identifying $d$-orthogonality with $(d+1)$-Hessenberg generalized Riordan arrays and extending the classical $d=1$ tridiagonal correspondence.

\vskip .2in
\noindent
AMS Subject Classification: 11A07, 11Y11.

\vskip .2in
\noindent
{\bf Key Words and Phrases:} 
Recursive polynomial sequences, orthogonal polynomial sequences, Jacobi matrix, Gegenbauer-Humbert Polynomials, Riordan arrays, production matrix, Legendre polynomials, Chebyshev polynomials of the first kind and the second kind.
\end{abstract}

\section{Introduction}
Riordan arrays are infinite, lower triangular matrices defined by the generating function of their columns. They form a group, denoted by $\mathcal{R}$ and called {\em the Riordan group} (see Shapiro, Getu, W. J. Woan and L. Woodson \cite{SGWW}). 

A Riordan array $(g(t),f(t))$ is a pair of formal power series $g(t) = \sum_{n\geq 0}g_nt^n$ and $f(t) = \sum_{n\geq 1}f_nt^n$, with $g_0\not= 0$ and $f_1\not= 0$. It defines an infinite lower triangular array $[d_{n,k}]_{n,k\geq 0}$ according to the rule $d_{n,k} = [t^n ]g(t)f(t)^k$. The set of all Riordan arrays forms a group under matrix multiplication

\[
(g(t), f (t))(h(t), l(t)) = (g(t)h(f (t)), l(f (t))).
\]

More formally, let us consider the set of all formal power series (f.p.s.) in $t$, $\F = {\mathbb K}[\![$$t$$]\!]$, with a field ${\mathbb K}$ of characteristic $0$ (e.g., ${\mathbb Q}$, ${\mathbb R}$, ${\mathbb C}$, etc.). The \emph{order} of $f(t)  \in \F$, $f(t) =\sum_{k=0}^\infty f_kt^k$ ($f_k\in {\bR}$), is the minimal number $r\in\N$ such that $f_r \neq 0$. Denote by $\F_r$ the set of formal power series of order $r$. Let $g(t) \in \F_0$ and $f(t) \in \F_1$. Then, the pair $(g(t) ,\,f(t) )$ defines the {\em (proper) Riordan array} $D=(d_{n,k})_{n,k\in \NS}=(g(t), f(t))$ having
  
\begin{equation}\label{Radef}
d_{n,k} = [t^n]g(t) f(t) ^k
\end{equation}
or, in other words, having $g(t) f(t)^k$ as the generating function whose coefficients make-up the entries of column $k$. 

From the {\it fundamental theorem of Riordan arrays} (see \cite{SGWW}), it is immediate to show that the usual row-by-column product of two Riordan arrays is also a Riordan array:
\begin{equation}\label{Proddef}
    (g_1(t) ,\,f_1(t) )  (g_2(t) ,\,f_2(t) ) = (g_1(t) g_2(f_1(t) ),\,f_2(f_1(t) )).
\end{equation}
The Riordan array $I = (1,\,t)$ acts as an identity for this product. Thus, the set of all Riordan arrays forms the Riordan group $\mathcal{R}$.

Several subgroups of $\mathcal{R}$ are important and have been considered in the literature:
\begin{itemize} \item the set $\mathcal{A}$ of {\em Appell arrays} is the collection of all Riordan arrays $R = (g(t) ,\,t )$ in ${\cR}$; 
\item the set $\mathcal{L}$ of {\em Lagrange arrays} is the collection of all Riordan arrays $R = (1 ,\,f(t) )$ in ${\cR}$;
\item the set $\mathcal{B}$ of \emph{Bell} or {\em renewal arrays} is the collection of all Riordan arrays $R = (g(t) ,\,t g(t))$ in ${\cR}$;
\item the set $\mathcal{H}$ of \emph{hitting-time arrays} is the collection of all Riordan arrays $R = (tf'(t)/f(t) ,\,f(t))$ in ${\cR}$;
\item the set $\mathcal{D}$ of the Riordan arrays $R = (f'(t) ,\, f(t) )$ in ${\cR}$ is called the {\it derivative subgroup}.
\item the set $\mathcal{E}$ of the Riordan arrays $R=((f(t)/t)^{r}f'(t)^{s}, f(t))$ in ${\cR}$ with real or complex $r$ and $s$ is called {\it Lu\'zon-Merlini-Mor\'on-Sprugnoli (LMMS) subgroup}, denoted by ${\cE}[r,s]$, which includes ${\cD}={\cE}[0,1]$ as its special case (see \cite{LMMS2}). 
\end{itemize}

From \cite{Rog}, an infinite lower triangular array $[d_{n,k}]_{n,k\in{\bN}}=(g(t), f(t))$ is a Riordan array if and only if an {\it $A$-sequence} $A=(a_0\not= 0, a_1, a_2,\ldots)$ exists such that for every $n,k\in{\bN}$ there holds 
\be\label{eq:1.1}
d_{n+1,k+1} =a_0 d_{n,k}+a_1d_{n,k+1}+\cdots +a_nd_{n,n},
\ee 
which is shown in \cite{HS09} to be equivalent to 
\be\label{eq:1.2}
f(t)=tA(f(t)).
\ee
Here, $A(t)$ is the generating function of the $A$-sequence. In \cite{MRSV} it is also shown that a unique {\it $Z$-sequence} $Z=(z_0, z_1,z_2,\ldots)$ exists such that every element in column $0$ can be expressed as the linear combination 
\be\label{eq:1.3}
d_{n+1,0}=z_0 d_{n,0}+z_1d_{n,1}+\cdots +z_n d_{n,n},
\ee
or equivalently (see \cite{HS09}),
\be\label{eq:1.4}
g(t)=\frac{d_{0,0}}{1-tZ(f(t))}.
\ee

Denote the {\it upper shift matrix} by $U$, i.e., 
\[
U=(\delta_{i+1,j})_{i,j\geq 0}=\left [ \begin{array}{llllll} 0& 1& 0& 0& 0& \cdots\\
0&0 &1& 0& 0&\cdots\\
0 &0& 0&1& 0& \cdots\\
0&0& 0& 0&1&\cdots\\
\vdots &\vdots& \vdots& \vdots&\vdots&\ddots\end{array}\right]
\]
and 
\be\label{eq:1.4-2}
P=\left [ \begin{array}{llllll} z_0& a_0& 0& 0& 0& \cdots\\
z_1& a_1 &a_0& 0& 0&\cdots\\
z_2 &a_2& a_1& a_0& 0& \cdots\\
z_3& a_3& a_2& a_1&a_0&\cdots\\
\vdots &\vdots& \vdots& \vdots&\vdots&\ddots\end{array}\right]=\left( Z(t), A(t), tA(t), t^{2}A(t),\ldots\right), 
\ee
where the rightmost expression is the representation of $P$ by using its column generating functions. Here, $P$ is called the {\it production matrix} or {\it $P$-matrix characterization} or simply {\it $P$ matrix} (see Deutsch, Ferrari, and Rinaldi 
\cite{DFR05, DFR}). From \cite{DFR05, DFR} or Proposition 2.7 of \cite{He15}, the $P$-matrix of Riordan array $R$ satisfies 

\be\label{1.5}
P =R^{-1}UR=R^{-1}\overline{R},
\ee
where $\overline{R}$ is the truncated Riordan array $R$ with the first row omitted. In Section 3, the notation of production matrix will be extended to any invertible lower triangular matrices. 

The following theorem gives the ordinary generating function of a polynomial sequence satisfying a homogeneous linear recurrence of order $\ell$.

\begin{theorem}\label{thm:lth-order-generating-function}
Let $\{p_n(x)\}_{n\ge0}$ be a sequence of polynomials satisfying
\begin{equation}
p_{n+\ell}(x)=\sum_{j=1}^{\ell}a_j(x)p_{n+\ell-j}(x), \qquad n\ge0,
\label{eq:lth-order-recurrence}
\end{equation}
where
\[
a_1(x),a_2(x),\ldots,a_\ell(x)
\]
are fixed polynomials in $x$.

Assume that the initial polynomials
\[
p_0(x),p_1(x),\ldots,p_{\ell-1}(x)
\]
are prescribed.

Define the ordinary generating function
\begin{equation}
P(t,x)=\sum_{n=0}^{\infty}p_n(x)t^n.
\label{eq:ordinary-generating-function}
\end{equation}

Then
\begin{equation}
P(t,x)=\frac{N(t,x)}{D(t,x)},
\label{eq:lth-generating-function}
\end{equation}
where
\begin{equation}
D(t,x)=1-a_1(x)t-a_2(x)t^2-\cdots-a_\ell(x)t^\ell,
\label{eq:lth-denominator}
\end{equation}
and
\begin{equation}
N(t,x)=\sum_{m=0}^{\ell-1}\left(p_m(x)-\sum_{j=1}^{m}a_j(x)p_{m-j}(x)\right)t^m.
\label{eq:lth-numerator}
\end{equation}

Equivalently,
\begin{align}
N(t,x)={}&p_0(x)+\bigl(p_1(x)-a_1(x)p_0(x)\bigr)t
\nonumber\\
&+\bigl(p_2(x)-a_1(x)p_1(x)-a_2(x)p_0(x)\bigr)t^2
\nonumber\\
&+\cdots\nonumber\\
&+\left(p_{\ell-1}(x)-\sum_{j=1}^{\ell-1}a_j(x)p_{\ell-1-j}(x)\right)t^{\ell-1}.
\label{eq:lth-numerator-expanded}
\end{align}
\end{theorem}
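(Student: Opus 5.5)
The plan is to multiply the generating function $P(t,x)$ by $D(t,x)$ and show that the product is the polynomial $N(t,x)$ of degree at most $\ell-1$ in $t$. All manipulations take place in the ring of formal power series in $t$ with coefficients in $\mathbb{K}[x]$. Since $D(t,x)$ has constant term $1$, it is invertible in this ring, so the identity $D\,P=N$ is equivalent to $P=N/D$.

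First, we set $a_0(x):=-1$, or more conveniently write $D(t,x)=\sum_{j=0}^{\ell}d_j(x)t^j$ with $d_0=1$ and $d_j=-a_j(x)$ for $1\le j\le\ell$. The Cauchy product then gives
\[
D(t,x)P(t,x)=\sum_{m\ge0}\Bigl(p_m(x)-\sum_{j=1}^{\min(m,\ell)}a_j(x)p_{m-j}(x)\Bigr)t^m .
\]
We then split the range of $m$ into two cases.

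For $m\ge\ell$, write $m=n+\ell$ with $n\ge0$. The coefficient of $t^m$ is $p_{n+\ell}(x)-\sum_{j=1}^{\ell}a_j(x)p_{n+\ell-j}(x)$, which vanishes by the recurrence \eqref{eq:lth-order-recurrence}.

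For $0\le m\le\ell-1$, we have $\min(m,\ell)=m$, so the coefficient is exactly $p_m(x)-\sum_{j=1}^{m}a_j(x)p_{m-j}(x)$. This is the coefficient appearing in \eqref{eq:lth-numerator}, where the inner sum is empty when $m=0$.

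Hence $D P=N$, and dividing by $D$ gives \eqref{eq:lth-generating-function}. Writing out the terms $m=0,1,\dots,\ell-1$ gives the expanded form \eqref{eq:lth-numerator-expanded}.

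There is no real obstacle in this argument. The one point needing care is the truncation index $\min(m,\ell)$ in the Cauchy product, so that the two ranges of $m$ are handled correctly. We should also note explicitly that $D$ is a unit in $\mathbb{K}[x][\![t]\!]$, since its constant term is $1$; this justifies the division without any convergence assumptions.
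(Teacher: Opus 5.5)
Your proof is correct and is essentially the paper's argument. The paper subtracts the initial terms, uses the recurrence to rewrite the tail as shifted copies of $P$, and collects terms to reach $D\,P=N$; you get the same identity in one step by reading off the Cauchy-product coefficients of $D\,P$ and splitting at $m=\ell$. Your remark that $D$ is a unit in $\mathbb{K}[x][\![t]\!]$ also justifies the final division, which the paper leaves implicit.
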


\begin{proof}
From the definition of the generating function,

\[
P(t,x)=\sum_{n=0}^{\infty}p_n(x)t^n.
\]

Hence

\[
P(t,x)-\sum_{m=0}^{\ell-1}p_m(x)t^m=\sum_{n=\ell}^{\infty}p_n(x)t^n.
\]

Using the recurrence relation \eqref{eq:lth-order-recurrence},

\[
p_n(x)=\sum_{j=1}^{\ell}a_j(x)p_{n-j}(x),\qquad n\ge\ell,
\]
we obtain

\begin{align}
\sum_{n=\ell}^{\infty}p_n(x)t^n&=\sum_{j=1}^{\ell}a_j(x)\sum_{n=\ell}^{\infty}p_{n-j}(x)t^n \nonumber\\
&=\sum_{j=1}^{\ell}a_j(x)t^j\sum_{m=\ell-j}^{\infty}p_m(x)t^m.
\label{eq:shifted-sums}
\end{align}

Since

\[
\sum_{m=\ell-j}^{\infty}p_m(x)t^m=P(t,x)-\sum_{m=0}^{\ell-j-1}p_m(x)t^m,
\]
equation \eqref{eq:shifted-sums} becomes

\[
P(t,x)-\sum_{m=0}^{\ell-1}p_m(x)t^m=\sum_{j=1}^{\ell}a_j(x)t^j\left(P(t,x)-\sum_{m=0}^{\ell-j-1}p_m(x)t^m\right).
\]

Collecting the terms containing $P(t,x)$ yields

\begin{align}
P(t,x)\left(1-\sum_{j=1}^{\ell}a_j(x)t^j\right)&=\sum_{m=0}^{\ell-1}p_m(x)t^m\nonumber\\
&\quad-\sum_{j=1}^{\ell}a_j(x)t^j\sum_{m=0}^{\ell-j-1}p_m(x)t^m.
\label{eq:collecting-terms}
\end{align}

It remains to determine the coefficient of each power $t^m$ on the
right-hand side. For $0\le m\le \ell-1$, the coefficient of $t^m$ is

\[
p_m(x)-\sum_{j=1}^{m}a_j(x)p_{m-j}(x).
\]

Therefore the right-hand side of \eqref{eq:collecting-terms} equals

\[
\sum_{m=0}^{\ell-1}\left(p_m(x)-\sum_{j=1}^{m}a_j(x)p_{m-j}(x)\right)t^m.
\]
Consequently,

\[
P(t,x)=\frac{\displaystyle\sum_{m=0}^{\ell-1}\left(p_m(x)-\sum_{j=1}^{m}a_j(x)p_{m-j}(x)\right)t^m}
{\displaystyle1-a_1(x)t-a_2(x)t^2-\cdots-a_\ell(x)t^\ell},
\]
which proves \eqref{eq:lth-generating-function}.
\end{proof}

\begin{corollary}[Second-Order Case]
For $\ell=2$,

\[
p_{n+2}(x)=a_1(x)p_{n+1}(x)+a_2(x)p_n(x),
\]
the generating function is

\[
P(t,x)=\frac{p_0(x)+\bigl(p_1(x)-a_1(x)p_0(x)\bigr)t}{1-a_1(x)t-a_2(x)t^2}.
\]
\end{corollary}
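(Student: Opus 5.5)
This corollary is a direct specialization of Theorem~\ref{thm:lth-order-generating-function}, so the plan is to substitute $\ell=2$ into \eqref{eq:lth-generating-function}--\eqref{eq:lth-numerator}. First, the denominator \eqref{eq:lth-denominator} becomes $D(t,x)=1-a_1(x)t-a_2(x)t^2$ immediately.

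Next we evaluate the numerator \eqref{eq:lth-numerator} term by term.
\begin{itemize}
\item For $m=0$ the inner sum $\sum_{j=1}^{0}$ is empty, so the coefficient of $t^0$ is $p_0(x)$.
\item For $m=1$ only $j=1$ contributes, so the coefficient of $t$ is $p_1(x)-a_1(x)p_0(x)$.
\end{itemize}
This matches the first two lines of \eqref{eq:lth-numerator-expanded}, truncated at $t^{\ell-1}=t$. Dividing gives the stated formula.

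As an optional independent check, we can redo the computation directly. Multiply the recurrence by $t^{n+2}$ and sum over $n\ge 0$. This gives
\[
P-p_0-p_1t=a_1t(P-p_0)+a_2t^2P.
\]
Solving for $P$ yields the same expression.

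There is no real obstacle. The only point needing care is the empty-sum convention at $m=0$, and noting that $1-a_1(x)t-a_2(x)t^2$ is invertible in $\mathbb{K}[x][\![t]\!]$ because its constant term is $1$.
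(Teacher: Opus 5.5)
Your proposal is correct and follows the paper's approach: the paper gives no separate proof of this corollary and treats it as the $\ell=2$ specialization of Theorem~\ref{thm:lth-order-generating-function}, which is exactly your substitution. Your optional direct check, summing the recurrence against $t^{n+2}$, is simply the $\ell=2$ instance of that theorem's own proof.
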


\begin{corollary}[Third-Order Case]
For $\ell=3$,

\[
p_{n+3}(x)=a_1(x)p_{n+2}(x)+a_2(x)p_{n+1}(x)+a_3(x)p_n(x),
\]

the generating function is

\[
P(t,x)=\frac{p_0(x)+\bigl(p_1(x)-a_1(x)p_0(x)\bigr)t+\bigl(p_2(x)-a_1(x)p_1(x)-a_2(x)p_0(x)\bigr)t^2}
{1-a_1(x)t-a_2(x)t^2-a_3(x)t^3}.
\]
\end{corollary}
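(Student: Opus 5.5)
The Third-Order Case is a direct specialization of Theorem~\ref{thm:lth-order-generating-function}, so the plan is simply to set $\ell=3$ in \eqref{eq:lth-generating-function} and evaluate the two ingredients. First, the denominator \eqref{eq:lth-denominator} with $\ell=3$ reads $D(t,x)=1-a_1(x)t-a_2(x)t^2-a_3(x)t^3$. This already matches the denominator in the statement.

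Second, I would expand the numerator \eqref{eq:lth-numerator} term by term for $m=0,1,2$. For $m=0$ the inner sum over $1\le j\le 0$ is empty, so the coefficient is $p_0(x)$. For $m=1$ the inner sum has only $j=1$, giving $p_1(x)-a_1(x)p_0(x)$. For $m=2$ the inner sum has $j=1,2$, giving $p_2(x)-a_1(x)p_1(x)-a_2(x)p_0(x)$. Summing these against $1,t,t^2$ gives exactly the numerator in the statement. Equivalently, one can truncate the expanded form \eqref{eq:lth-numerator-expanded} after its $t^{2}$ term.

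As an independent sanity check, I would also rerun the telescoping argument of the theorem's proof directly. Multiply $P(t,x)$ by $1-a_1t-a_2t^2-a_3t^3$ and observe that, by the recurrence, every coefficient of $t^n$ with $n\ge 3$ vanishes. Only the coefficients of $t^0,t^1,t^2$ survive, and these are the three expressions listed above.

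There is no genuine obstacle. The only point requiring care is the convention that an empty sum (the case $m=0$) equals zero. One should also note that $a_3(x)$ never enters the numerator, because for $\ell=3$ the index $j$ runs only up to $m\le 2$.
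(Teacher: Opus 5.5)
Your proposal is correct and follows the paper's approach: the corollary is an immediate specialization of Theorem~\ref{thm:lth-order-generating-function} to $\ell=3$. Evaluating the numerator term by term for $m=0,1,2$, with the empty-sum convention at $m=0$, is exactly what that specialization amounts to, and your sanity check of multiplying by $1-a_1(x)t-a_2(x)t^2-a_3(x)t^3$ simply reruns the paper's proof of the general theorem in this case.
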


A sequence of the generalized Gegenbauer-Humbert polynomials 
$\{ P^{\lambda, y, C}_n(x)\}_{n\geq 0}$ is defined by the expansion 
(see, for example, \cite{Com}, Dunkl and Xu \cite{DX}, Gould \cite{Gould}, Lidl, Mullen, and Turnwald\cite{LMT}, one of the authors with Hsu and Shiue \cite{HHS08}) 

\begin{definition}\label{def:5}
The generalized Gegenbauer--Humbert polynomial sequence
\[
\{P_n^{\lambda,C,y}(x)\}_{n\ge0}
\]
is defined by the generating function
\begin{equation}\label{eq:10}
(C-2xt+yt^2)^{-\lambda} = \sum_{n=0}^{\infty} P_n^{\lambda,C,y}(x)t^n,
\end{equation}
where
\[
\lambda>0,\qquad C\neq0,\qquad y\in\mathbb{R}.
\]

When $y$ is regarded as a variable rather than a parameter, we shall
write
\[
P_n^{(\lambda,C)}(x,y)
\]
instead of
\[
P_n^{\lambda,C,y}(x),
\]
emphasizing that the resulting polynomial is bivariate in the variables
$x$ and $y$.

\end{definition}

Differentiating both sides of (\ref{eq:10}) with respect to $t$ gives
\begin{equation}\label{eq:12}
2\lambda(x-yt)(C-2xt+yt^2)^{-\lambda-1} = \sum_{n=1}^{\infty} nP_n^{\lambda,C,y}(x)t^{\,n-1}.
\end{equation}

Substituting (\ref{eq:10}) into the left-hand side of
(\ref{eq:12}) and comparing coefficients of $t^n$ yield
\[
C(n+1)P_{n+1}^{\lambda,C,y}(x) = 2x(\lambda+n)P_n^{\lambda,C,y}(x)
-y(2\lambda+n-1)P_{n-1}^{\lambda,C,y}(x).
\]

Replacing $n+1$ by $n$ gives the recurrence
\begin{equation}\label{eq:13}
P_n^{\lambda,C,y}(x) = \frac{2x(\lambda+n-1)}{Cn} P_{n-1}^{\lambda,C,y}(x)
-\frac{y(2\lambda+n-2)}{Cn}P_{n-2}^{\lambda,C,y}(x), \qquad n\ge2,
\end{equation}
with initial values
\[
P_0^{\lambda,C,y}(x)=C^{-\lambda},
\qquad
P_1^{\lambda,C,y}(x)=2\lambda xC^{-\lambda-1}.
\]

In particular, when $\lambda=1$, (\ref{eq:13}) becomes
\begin{equation}\label{eq:14}
P_n^{1,C,y}(x) = \frac{2x}{C} P_{n-1}^{1,C,y}(x)
-\frac{y}{C} P_{n-2}^{1,C,y}(x), \qquad n\ge2,
\end{equation}
where
\[
P_0^{1,C,y}(x)=C^{-1},
\qquad
P_1^{1,C,y}(x)=2xC^{-2}.
\]

\begin{proposition}\label{pro:special}

The generalized Gegenbauer--Humbert polynomials contain many classical
polynomial families as special cases.

\begin{align*}
P_n^{1,1,1}(x) &=U_n(x),
&&\text{Chebyshev polynomials of the second kind},\\
P_n^{\frac12,1,1}(x) &=\psi_n(x),
&&\text{Legendre polynomials},\\
P_n^{1,-1,1}(x) &=P_{n+1}(x),
&&\text{Pell polynomials},\\
P_n^{1,-1,1}\!\left(\frac{x}{2}\right) &=F_{n+1}(x),
&&\text{Fibonacci polynomials},\\
P_n^{1,2,1}\!\left(\frac{x}{2}\right) &=\Phi_{n+1}(x),
&&\text{Fermat polynomials of the first kind},\\
P_n^{1,2a,2}(x) &=D_n(x,a)/2,
&&\text{Dickson polynomials}.
\end{align*}

Here $a$ is a real parameter, and
\[
F_n=F_n(1)
\]
denotes the $n$th Fibonacci number.
\end{proposition}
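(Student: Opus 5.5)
The plan is to prove each identity by specializing the generating function \eqref{eq:10}, or equivalently the recurrence \eqref{eq:13}/\eqref{eq:14} with its initial values. I would then compare with the standard generating function or recurrence defining each classical family. By the Second-Order Case corollary of Theorem~\ref{thm:lth-order-generating-function}, a sequence satisfying a second-order recurrence is determined by its generating function $\frac{p_0+(p_1-a_1p_0)t}{1-a_1t-a_2t^2}$. So in the $\lambda=1$ cases it suffices to match the coefficients $a_1,a_2$ and the two initial values.

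I would take the cases in order.

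\emph{Chebyshev, second kind.} With $\lambda=C=y=1$, \eqref{eq:10} becomes $(1-2xt+t^2)^{-1}=\sum U_n(x)t^n$. This is the classical generating function of $U_n$. Equivalently, \eqref{eq:14} gives $U_n=2xU_{n-1}-U_{n-2}$ with $U_0=1$, $U_1=2x$.

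\emph{Legendre.} With $\lambda=\tfrac12$, $C=y=1$, \eqref{eq:10} becomes $(1-2xt+t^2)^{-1/2}$, the classical Legendre generating function. As a check, \eqref{eq:13} gives Bonnet's recurrence $n\psi_n=(2n-1)x\psi_{n-1}-(n-1)\psi_{n-2}$.

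\emph{Pell, Fibonacci, Fermat.} These are all $\lambda=1$ cases. I would compute $(C-2xt+yt^2)^{-1}$ at the stated parameters, substituting $x/2$ where indicated. I would then compare with the standard shifted generating functions:
\[
\sum_{n\ge0}F_{n+1}(x)t^n=\frac{1}{1-xt-t^2},\qquad
\sum_{n\ge0}\Phi_{n+1}(x)t^n=\frac{1}{1-3xt+2t^2}\ \text{(up to normalization)},
\]
and the analogous Pell form $1/(1-2xt-t^2)$. The index shift $n\mapsto n+1$ comes from the initial value $P_0^{1,C,y}=C^{-1}$ playing the role of the first nonzero term.

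\emph{Dickson.} Likewise I would match $(2a-2xt+2t^2)^{-1}$, after normalization, against the recurrence $D_n=xD_{n-1}-aD_{n-2}$ with its initial values, using \eqref{eq:14}.

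I expect the main obstacle to be normalization and sign conventions rather than anything conceptual. For instance, $C=-1$ gives $(-1-2xt+t^2)^{-1}=-\,(1+2xt-t^2)^{-1}$. This agrees with the Pell generating function only after $x\mapsto -x$ and an overall factor of $\pm1$, or under a particular convention for the Pell and Fibonacci polynomials. The Dickson and Fermat cases likewise depend on which normalization of $D_n(x,a)$ and $\Phi_n(x)$ is adopted.

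So for each family I would first fix the convention explicitly through its recurrence and initial values. I would then verify the stated identity against \eqref{eq:14}: check that $2x/C$ and $-y/C$ reproduce the family's recurrence coefficients, and that $C^{-1}$ and $2xC^{-2}$ reproduce its first two terms. Where they do not match, I would record the needed sign, argument substitution, or constant factor as the precise form in which the identity holds. The comparison is then complete by uniqueness of solutions of a second-order recurrence with given initial data.
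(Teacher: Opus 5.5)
\textbf{Gap: four of the six identities are never proved, and as written they are false.}

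The paper states this proposition without proof; it is quoted from the literature, so there is no argument to compare against. Your generating-function comparison is the natural route. It does settle the Chebyshev and Legendre lines, where $C=y=1$, and your Bonnet check is correct.

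The gap is in the other four lines. Your plan ends with ``where they do not match, I would record the needed sign, argument substitution, or constant factor.'' So those identities are never established. Under the convention $P_n^{\lambda,C,y}$ of Definition~\ref{def:5} they are false, already at $n=0$:
\begin{itemize}
\item $P_0^{1,-1,1}=C^{-1}=-1$, whereas the Pell and Fibonacci targets $P_1(x)$ and $F_1(x)$ both equal $1$;
\item $P_0^{1,2,1}=\tfrac12$, whereas $\Phi_1=1$;
\item $P_0^{1,2a,2}=\tfrac1{2a}$, whereas $D_0(x,a)/2=1$.
\end{itemize}
Carried out literally, your procedure would produce
\begin{itemize}
\item $P_n^{1,-1,1}(x)=(-1)^{n+1}P_{n+1}(x)$,
\item $P_n^{1,-1,1}(x/2)=(-1)^{n+1}F_{n+1}(x)$,
\item $P_n^{1,2,1}(x/2)=2^{-(n+1)}\Phi_{n+1}(x)$,
\item $P_n^{1,2a,2}(x)=E_n(x,a)/(2a^{n+1})$.
\end{itemize}
These involve $n$-dependent rescalings (from $t\mapsto -t$, $t\mapsto t/2$, $t\mapsto t/a$), not the constant factors you anticipated. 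The last one is even a different family. They are different statements, not the proposition.

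The missing idea is that this is not a matter of sign conventions for the classical families. $C$ and $y$ are transposed in these four superscripts, which follow the literature's ordering $P_n^{\lambda,y,C}$. The paper itself uses that ordering in the Introduction and in Example~\ref{ex:3}, where Pell is taken with $C=1,\ y=-1$ and Fermat with $C=1,\ y=2$. With that reading your comparison closes exactly:
\begin{itemize}
\item $(1-2xt-t^2)^{-1}=\sum_n P_{n+1}(x)t^n$;
\item $(1-xt-t^2)^{-1}=\sum_n F_{n+1}(x)t^n$;
\item $(1-xt+2t^2)^{-1}=\sum_n \Phi_{n+1}(x)t^n$, provided the Fermat polynomials are normalized by $\Phi_{n+1}=x\Phi_n-2\Phi_{n-1}$, $\Phi_0=0$, $\Phi_1=1$.
\end{itemize}
That Fermat normalization is the one matching the Binet form in Example~\ref{ex:3}. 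The $1/(1-3xt+2t^2)$ you proposed would not match.

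The Dickson line fails even after the transposition. Taking $C=2$, $y=2a$ gives $(2-2xt+2at^2)^{-1}=\tfrac12(1-xt+at^2)^{-1}$. Hence $2P_n^{1,2a,2}(x)=E_n(x,a)$, the Dickson polynomial of the second kind ($E_0=1$, $E_1=x$, $E_n=xE_{n-1}-aE_{n-2}$). It is not $D_n(x,a)$ with $D_0=2$, $D_1=x$ as in Example~\ref{ex:2}.

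A complete write-up has to fix the parameter ordering explicitly and state the Dickson line in this corrected form. It cannot leave the discrepancies to be ``recorded'' case by case.
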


\begin{theorem}\label{thm:4}(\cite{HS})
Assume that
\[
x\neq\pm\sqrt{Cy}.
\]
Then the generalized Gegenbauer--Humbert polynomials
\[
\{P_n^{1,C,y}(x)\}_{n\ge0}
\]
admit the closed-form representation
\begin{align}
P_n^{1,C,y}(x) &= C^{-n-1}\Bigg[ \frac{x+\sqrt{x^2-Cy}} {2\sqrt{x^2-Cy}}
\left(x+\sqrt{x^2-Cy}\right)^n\nonumber\\
&\qquad\qquad
- \frac{x-\sqrt{x^2-Cy}} {2\sqrt{x^2-Cy}} \left(x-\sqrt{x^2-Cy}\right)^n\Bigg].
\label{eq:11}
\end{align}
\end{theorem}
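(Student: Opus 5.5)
The plan is to work directly from the generating function (\ref{eq:10}) with $\lambda=1$ and read off coefficients by partial fractions, rather than solving the recurrence (\ref{eq:14}). Equivalently, one can view this as the second-order case of Theorem~\ref{thm:lth-order-generating-function} with $a_1(x)=2x/C$, $a_2(x)=-y/C$, $p_0=C^{-1}$, $p_1=2xC^{-2}$. In that case the numerator $N(t,x)=p_0+(p_1-a_1p_0)t$ collapses to the constant $C^{-1}$, so
\[
\sum_{n\ge0}P_n^{1,C,y}(x)t^n=\frac{C^{-1}}{1-\frac{2x}{C}t+\frac{y}{C}t^2}=\frac{1}{C-2xt+yt^2},
\]
consistent with (\ref{eq:10}).

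First, I factor the denominator as $C-2xt+yt^2=C(1-\alpha t)(1-\beta t)$. Matching coefficients requires $\alpha+\beta=2x/C$ and $\alpha\beta=y/C$, so $\alpha,\beta$ are the roots of $Cz^2-2xz+y=0$. Explicitly,
\[
\alpha=\frac{x+\sqrt{x^2-Cy}}{C},\qquad \beta=\frac{x-\sqrt{x^2-Cy}}{C}.
\]
The hypothesis $x\neq\pm\sqrt{Cy}$ is exactly the condition $x^2\neq Cy$. It guarantees $\alpha\neq\beta$, with $\alpha-\beta=2\sqrt{x^2-Cy}/C$, and this is the only place the hypothesis is needed. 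If $x^2<Cy$, the square root is taken in $\mathbb{C}$; the final expression is symmetric under swapping the two branches, so it is still a real polynomial in $x$.

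Second, I perform the partial fraction decomposition
\[
\frac{1}{C(1-\alpha t)(1-\beta t)}=\frac{1}{C(\alpha-\beta)}\left(\frac{\alpha}{1-\alpha t}-\frac{\beta}{1-\beta t}\right).
\]
Expanding both geometric series and extracting $[t^n]$ gives
\[
P_n^{1,C,y}(x)=\frac{\alpha^{n+1}-\beta^{n+1}}{C(\alpha-\beta)}=\frac{\alpha^{n+1}-\beta^{n+1}}{2\sqrt{x^2-Cy}}.
\]

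Finally, I substitute $\alpha^{n+1}=C^{-n-1}(x+\sqrt{x^2-Cy})^{n+1}$, do the same for $\beta$, and split off one factor $(x\pm\sqrt{x^2-Cy})$ from each power. This yields exactly (\ref{eq:11}).

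As a cross-check, one could verify (\ref{eq:11}) through the recurrence (\ref{eq:14}) instead. The right-hand side has the Binet form $A\alpha^n+B\beta^n$, where $\alpha,\beta$ are the characteristic roots of $r^2=\frac{2x}{C}r-\frac{y}{C}$, so it satisfies (\ref{eq:14}) automatically. It then suffices to check the two initial values: $n=0$ gives $C^{-1}\cdot\frac{2\sqrt{x^2-Cy}}{2\sqrt{x^2-Cy}}=C^{-1}$, and $n=1$ gives $C^{-2}\cdot\frac{4x\sqrt{x^2-Cy}}{2\sqrt{x^2-Cy}}=2xC^{-2}$.

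I do not expect a genuine obstacle here. The only point needing care is the bookkeeping of the powers of $C$: the factor $C$ in front of the factored denominator combines with $\alpha-\beta$ to produce the $2\sqrt{x^2-Cy}$ in the denominator, while the $C^{-n-1}$ comes from normalizing $\alpha^{n+1}$ and $\beta^{n+1}$.
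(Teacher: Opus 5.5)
Your proposal is correct. The partial-fraction expansion of $1/(C-2xt+yt^2)$ is exactly the derivation of the second-order Binet formula that the paper simply cites for the recurrence (\ref{eq:14}), and your cross-check (Binet form in the characteristic roots $(x\pm\sqrt{x^2-Cy})/C$, plus the initial values $C^{-1}$ and $2xC^{-2}$) is the paper's one-line argument with the initial-condition check made explicit, so this is essentially the paper's approach written out in full.
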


\begin{proof}
The recurrence (\ref{eq:14}) has constant coefficients. Applying the general Binet formula for second-order linear recurrences, immediately yields (\ref{eq:11}).
\end{proof}

\begin{proposition}\label{pro:5}
In the special case $\lambda=C=1$, then the generalized Gegenbauer--Humbert polynomials reduce to $P^{1,1,y}(x)$ satisfying the recursive relation 

\[
P^{1,1,y}_n(x)=2xP^{1,1,y}_{n-1}(x)-yP^{1,1,y}_{n-2}(x)
\]
for $n\geq 2$ with initials $P^{1,1,y}_0(x)=1$ and  $P^{1,1,y}_1(x)=2x$ and admitting 
the closed-form representation

\[
P^{1, 1, y}_n(x)= \frac{(x+\sqrt{x^2-y})^{n+1}-(x-\sqrt{x^2-y})^{n+1}}{2\sqrt{x^2-y}}.
\]
\end{proposition}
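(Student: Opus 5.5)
The plan is to specialize the general $\lambda=1$ recurrence \eqref{eq:14} and the closed form of Theorem~\ref{thm:4} to $C=1$, and then to check the result directly. Setting $C=1$ in \eqref{eq:14} gives $P^{1,1,y}_n(x)=2xP^{1,1,y}_{n-1}(x)-yP^{1,1,y}_{n-2}(x)$ for $n\ge2$. The stated initial values $P_0^{1,C,y}(x)=C^{-1}$ and $P_1^{1,C,y}(x)=2xC^{-2}$ become $1$ and $2x$. As an independent check, the generating function \eqref{eq:10} with $\lambda=C=1$ is $(1-2xt+yt^2)^{-1}$. By the Second-Order Case corollary with $a_1=2x$, $a_2=-y$, this requires numerator $p_0+(p_1-2xp_0)t=1$, which is consistent.

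For the closed form I will avoid simplifying \eqref{eq:11} and instead derive it directly from the Binet method. The rest of the argument is routine.

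Let $\alpha=x+\sqrt{x^2-y}$ and $\beta=x-\sqrt{x^2-y}$. These are the roots of $z^2-2xz+y=0$, so $\alpha+\beta=2x$ and $\alpha\beta=y$. Hence
\[
1-2xt+yt^2=(1-\alpha t)(1-\beta t).
\]
Assuming $x^2\neq y$, so that $\alpha\neq\beta$, partial fractions give
\[
\frac{1}{(1-\alpha t)(1-\beta t)}=\frac{1}{\alpha-\beta}\left(\frac{\alpha}{1-\alpha t}-\frac{\beta}{1-\beta t}\right).
\]
Extracting $[t^n]$ then yields $(\alpha^{n+1}-\beta^{n+1})/(\alpha-\beta)$, and $\alpha-\beta=2\sqrt{x^2-y}$. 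This is exactly the claimed formula.

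Alternatively, one can substitute $C=1$ into \eqref{eq:11} and combine $\tfrac{\alpha}{2\sqrt{x^2-y}}\alpha^n-\tfrac{\beta}{2\sqrt{x^2-y}}\beta^n$. To make the formula hold without any hypothesis, I would also verify directly that $q_n=(\alpha^{n+1}-\beta^{n+1})/(\alpha-\beta)$ satisfies $q_0=1$ and $q_1=\alpha+\beta=2x$. The recurrence follows from $\alpha^2=2x\alpha-y$ and $\beta^2=2x\beta-y$, so induction finishes the proof.

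The only delicate point is the degenerate case $x^2=y$, which Theorem~\ref{thm:4} excludes. I expect to handle it by interpreting the formula as a limit, or equivalently as the polynomial $\sum_{k=0}^{n}\alpha^k\beta^{n-k}$. That polynomial is symmetric in $\alpha$ and $\beta$, so the square roots disappear, and it gives $(n+1)x^n$ when $\alpha=\beta=x$.
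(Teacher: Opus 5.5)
Your proposal is correct and takes essentially the paper's route: the paper gives no separate proof and treats the proposition as the $C=1$ specialization of recurrence \eqref{eq:14}, its initial values, and the Binet-type closed form \eqref{eq:11} of Theorem~\ref{thm:4}, which is what you do (your partial-fraction computation just makes that Binet step explicit). Your handling of the degenerate case $x^2=y$ (which Theorem~\ref{thm:4} excludes but the statement silently omits) by reading the formula as $\sum_{k=0}^{n}\alpha^k\beta^{n-k}=(n+1)x^n$ is a welcome addition.
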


\begin{example}\label{ex:2}
We may use recurrence relation (\ref{eq:14}) to define various polynomials that were defined using different techniques. Comparing recurrence relation  (\ref{eq:14}) with the relations of the generalized Fibonacci and Lucas polynomials,
with the assumption of $P^{1,C,y}_0=0$ and $P^{1,C,y}_1=1$, we immediately know 
\[
P^{1, 1, 1}_n(x)=2x P^{1,1,1}_{n-1}(x)-P^{1,1,1}_{n-2}(x)
\]
defines the Chebyshev polynomials of the second kind,  
\[
P^{1,-1,1}_n(x)=2x P^{1,-1,1}_{n-1}(x)+P^{1,-1,1}_{n-2}(x)
\]
defines the Pell polynomials, and 
\[
P^{1,-1,1}_n\left( \frac{x}{2}\right) =x P^{1,-1,1}_{n-1}\left( \frac{x}{2}\right)+P^{1,-1,1}_{n-2}\left( \frac{x}{2}\right)
\]
defines the Fibonacci polynomials. 

In addition, in \cite{LMT}, Lidl, Mullen, and Turnwald defined the Dickson polynomials are also the special case of the generalized Gegenbauer-Humbert polynomials, which can be defined uniformly using recurrence relation (\ref{eq:14}), namely  
\[
D_n(x;a))=x D_{n-1}(x;a)-a D_{n-2}(x;a) =2P^{1,2a,2}_n(x)
\]
with $D_0(x;a)=2$ and $D_1(x;a)=x$. Thus, the general terms of all of above polynomials can be expressed using (\ref{eq:11}). 
\end{example}

\begin{example}\label{ex:3}
For $\lambda =y=C=1$, using (\ref{eq:11}) we obtain the expression of the Chebyshev polynomials of the second kind:
\[
U_n(x)= \frac{(x+\sqrt{x^2-1})^{n+1}-(x-\sqrt{x^2-1})^{n+1}}{2\sqrt{x^2-1}}, ~  
n\geq 0,
\]
where $x^2\not= 1$, and $U_0(x) =1,\, U_1(x) = 2x,\,U_2(x)=4x^2-1$, etc.

For $\lambda =C=1$ and $y=-1$, formula (\ref{eq:11}) gives the expression of a 
Pell polynomial of degree $n+1$: 
\[
P_{n+1}(x)= \frac{(x+\sqrt{x^2+1})^{n+1}-(x-\sqrt{x^2+1})^{n+1}}{2\sqrt{x^2+1}}, ~ 
n\geq 0.
\]
Thus, $P_0(x) = 0,\,P_1(x) = 1,\,P_2(x)=2x,\,P_3(x)=4x^2 + 1$, etc.

Similarly, let $\lambda =C=1$ and $y=-4$, the Fibonacci polynomials are
\[
F_{n+1}(x)=\frac{(x+\sqrt{x^2+4})^{n+1}-(x-\sqrt{x^2+4})^{n+1}}{2^{n+1}\sqrt{x^2+4}}, ~ 
n\geq 0,
\]
and $F_0(x)=0, F_1(x)=1, F_2(x)=x$, etc., and the Fibonacci numbers are 
\[
F_{n}=F_{n}(1)= \frac{1}{\sqrt{5}}\left\{ \left( \frac{1+\sqrt{5}}{2}\right)^{n} - 
\left( \frac{1-\sqrt{5}}{2}\right)^{n}
\right\}.
\]

Finally, for $\lambda =C=1$ and $y=2$, we have Fermat polynomials of the first kind:
\[
\Phi_{n+1}(x)= \frac{(x+\sqrt{x^2-8})^{n+1}-(x-\sqrt{x^2-8})^{n+1}}{2^{n+1}\sqrt{x^2-8}}, 
~ n\geq 0, 
\]
where $x^2\not= 2$. From the expressions of Chebyshev polynomials of the second kind, Pell polynomials, and Fermat polynomials of the first kind, we may get a class of the generalized Gegenbauer-Humbert polynomials with respect to $y$ defined in the next section.. 
\end{example}

\section{Riordan Characterization of Polynomial Sequences Defined by
Second-Order Recurrences}

We characterize those polynomial sequences satisfying a homogeneous second-order recurrence whose coefficient matrices are ordinary Riordan arrays.

\begin{theorem}\label{thm:Riordan-characterization}
Let $\{p_n(x)\}_{n\ge0}$ be a polynomial sequence satisfying
\begin{equation}
p_{n+2}(x)=a(x)p_{n+1}(x)+b(x)p_n(x),
\qquad n\ge0,
\label{eq:general-recurrence}
\end{equation}
with initial polynomials $p_0(x)$ and $p_1(x)$, where $a(x)$ and $b(x)$ are fixed polynomials (independent of $n$). 

Define the ordinary generating function
\[
P(t,x)=\sum_{n=0}^{\infty}p_n(x)t^n.
\]

Then
\[
P(t,x)=\frac{p_0(x)+\bigl(p_1(x)-a(x)p_0(x)\bigr)t}{1-a(x)t-b(x)t^2}.
\]

The coefficient matrix of $\{p_n(x)\}$, $(p_{n,k}(x))_{n\geq k\geq 0}$, is an ordinary Riordan array $(g,f)$, i.e., 

\[
p_n(x)=\sum^n_{k=0} d_{n,k} x^k,
\]
where $d_{n,k}=[t^n] g f^k$, if and only if the following conditions hold:

\begin{enumerate}
\item
Both $a(x)$ and $b(x)$ are affine functions of $x$,
\[
a(x)=a_0+a_1x, \qquad b(x)=b_0+b_1x;
\]
\item
$p_0(x)$ is a constant,
\[
p_0(x)=c;
\]
\item
The polynomial
\[
p_1(x)-a(x)p_0(x)
\]
is independent of $x$.
\end{enumerate}

Under these conditions,

\[
P(t,x)=\frac{g(t)}{1-xf(t)},
\]
where

\begin{equation}
g(t)=\frac{c+\bigl(d-ca_0\bigr)t}{1-a_0t-b_0t^2},
\label{eq:g-characterization}
\end{equation}
and

\begin{equation}
f(t)=\frac{a_1t+b_1t^2}{1-a_0t-b_0t^2},
\label{eq:f-characterization}
\end{equation}
where 

\[
d=p_1(x)-a(x)p_0(x),
\]
which is a constant by assumption.
\end{theorem}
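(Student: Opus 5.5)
The generating-function formula for $P(t,x)$ is the second-order corollary of Theorem~\ref{thm:lth-order-generating-function}, so the real content is the equivalence. The key observation is that the coefficient matrix $(d_{n,k})$ is a Riordan array $(g,f)$ exactly when the bivariate generating function has the form
\[
\sum_{n,k} d_{n,k}x^k t^n = \sum_k g(t)(xf(t))^k = \frac{g(t)}{1-xf(t)}
\]
with $g\in\F_0$ and $f\in\F_1$. I would therefore prove both directions by comparing this form with
\[
P(t,x)=\frac{N(t,x)}{D(t,x)}, \qquad N=p_0+(p_1-ap_0)t,\qquad D=1-at-bt^2 .
\]

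\emph{Sufficiency.} Assume conditions 1--3 hold. Split
\[
D=(1-a_0t-b_0t^2)-x(a_1t+b_1t^2), \qquad N=c+(d-ca_0)t .
\]
Dividing numerator and denominator by $1-a_0t-b_0t^2$ gives $P=g/(1-xf)$ with $g,f$ as in \eqref{eq:g-characterization} and \eqref{eq:f-characterization}. Expanding geometrically in $x$ then yields $d_{n,k}=[t^n]gf^k$.

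For properness one needs $g_0=c\neq 0$ and $f_1=a_1\neq0$. I will note that these are nondegeneracy assumptions: $c\neq 0$ is implicit in the polynomial sequence having a nonzero constant term, and $a_1\neq 0$ is needed for the matrix to be triangular with nonzero diagonal. Otherwise the array is a degenerate, non-proper Riordan array. I will state this caveat explicitly.

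\emph{Necessity.} Suppose $P=g(t)/(1-xf(t))$. Then $P(1-xf)=g$ is independent of $x$. Equivalently, $N(1-xf)=gD$ as formal power series in $t$ with polynomial-in-$x$ coefficients.

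Setting $t=0$ in $P$ gives $p_0(x)=d_{0,0}=g_0$, a constant $c$; this is condition 2.

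Next, $p_1(x)=g_1+g_0f_1x$, so $p_1$ is affine. Using the recurrence at each order, $p_{n}$ has degree at most $n$ with entries $d_{n,k}$, and I compare low-order coefficients. From $N(1-xf)=gD$, the $t^1$ coefficient gives
\[
(p_1-ap_0)-cf_1x=g_1-g_0a ,
\]
that is, $p_1-ca-cf_1x = g_1-ca$. Hence $p_1 = g_1+cf_1x$, which is consistent with the above but gives nothing new.

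The cleaner route is the following. $D\cdot P = N$ has degree at most $1$ in $t$. Writing $P=g/(1-xf)$, we get
\[
D\,g = N(1-xf).
\]
The left-hand side is a power series in $t$ whose $x$-dependence is exactly $-g\,(a(x)-a(0))t - g\,(b(x)-b(0))t^2$. The right-hand side is $N - xfN$, where $N=c+e(x)t$ with $e=p_1-ap_0$.

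I would extract the coefficient of $x^m$ for each $m\ge 2$ on both sides. On the left this is $-g\,(a_m t+b_m t^2)$. On the right it is $-f\,[x^{m-1}]N = -f\,e_{m-1}t$ when $m-1\ge1$; the right side has no such term if $e$ is constant. This turns the problem into a finite system of identities among the power series $g$, $f$, $gt$ and $gt^2$.

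\emph{Main obstacle.} The hard step is forcing $\deg a$, $\deg b\le 1$ and $e$ constant. I would argue via the degree in $x$. The $x$-degree of the right-hand side is $\deg_x N+1 = \deg e + 1$, while that of the left-hand side is $\max(\deg a,\deg b)$. Taking the top coefficient $x^{M}$, where $M=\deg e+1\ge 2$ is assumed, I get
\[
-g\,(a_Mt+b_Mt^2) = -f e_{M-1}t .
\]
This forces $f$ to be proportional to $g\,(a_M+b_Mt)$. Comparing with the coefficient of $x^1$, namely
\[
-g\,(a_1t+b_1t^2)= e_1t - f\,(c+e_0t),
\]
and with the $x^0$ coefficient, which determines $g$, should give an inconsistency unless the higher coefficients vanish. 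In particular the $t^1$ order gives $e_m=-$(something involving $c a_m$), and it is exactly here that I expect care to be needed.

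If this direct comparison becomes messy, my fallback is to use column $0$ and the $A$-sequence characterization~\eqref{eq:1.1}. That characterization forces the $k$-th column to be $gf^k$, so each $p_n$ has degree at most $n$. It should also force the recurrence coefficient of $x^k$ to be compatible with $f$ being rational of denominator $1-a_0t-b_0t^2$, ruling out $x^2$ terms in $a$ and $b$ by looking at the leading diagonal entries $d_{n,n}=g_0f_1^n$.
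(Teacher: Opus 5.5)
The real gap is the necessity step you flagged yourself. The $x^{M}$, $x^{1}$ and $x^{0}$ identities extracted from $Dg=N(1-xf)$ do \emph{not} become inconsistent. Neither they nor your $A$-sequence fallback can be made to work, because the ``only if'' direction is false as stated.

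Take $a(x)=0$, $b(x)=x^2$, $p_0=1$, $p_1=x$. Then $p_n=x^n$ for all $n$, so the coefficient matrix is the identity $(1,t)$, a proper Riordan array. Yet $b$ is not affine and $p_1-ap_0=x$ is not constant. In your notation $N=1+xt$, $D=1-x^2t^2=(1+xt)(1-xt)$, $g=1$, $f=t$. The top relation $-g\,b_2t^2=-f\,e_1t$ holds, and so do the $x^1$ and $x^0$ relations. More generally, any $p_n=c(\alpha+\beta x)^n$ satisfies $p_{n+2}=\bigl(\alpha+\beta x+s(x)\bigr)p_{n+1}-(\alpha+\beta x)s(x)\,p_n$ for an arbitrary polynomial $s$.

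What your degree count does legitimately give is this: $p_0$ is constant, and under the Riordan form conditions 1 and 3 are equivalent to each other (if $e$ is nonconstant then $\max(\deg a,\deg b)=\deg e+1\ge 2$). It cannot force either condition. The missing ingredient is a no-cancellation hypothesis: $N$ and $D$ coprime in $\mathbb{K}[x,t]$, equivalently $N\nmid D$. Given the Riordan form, the only sequences with $N\mid D$ are the geometric ones above, so these are exactly the exceptions. Under coprimality the argument goes through:
\begin{itemize}
\item Specializing $x$ shows $g$ and $f$ are rational; write $f=F_1/F_2$ in lowest terms.
\item The factor $F_2-xF_1$ is irreducible and cannot cancel against the $x$-free numerator of $g/(1-xf)$.
\item So $D$, being by coprimality the reduced denominator of $N/D=g/(1-xf)$, equals $\psi(t)(F_2-xF_1)$ up to a constant. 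That is, $D=\phi(t)(1-xf)$, and affinity (then condition 3) follows.
\end{itemize}
That factorization is precisely what the paper's proof asserts without justification. So the paper's necessity argument has the same hole: in the example, $D/(1-xf)=1+xt$ depends on $x$.

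Your sufficiency argument is the same as the paper's: split $D$ and divide by $1-a_0t-b_0t^2$. Your properness caveat is correct and is genuinely missing from the statement. For instance, $a=0$, $b=x$, $p_0=1$, $p_1=0$ satisfies conditions 1--3 but gives the non-proper array $(1,t^2)$.

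One slip: with $d:=p_1-ap_0$ as defined, $N=c+dt$, not $c+(d-ca_0)t$, so $g=(c+dt)/(1-a_0t-b_0t^2)$. The statement's formula for $g$ is right only if $d$ is read as $p_1(0)$. The form $c+dt$ is what the paper's own proof and its $P^{1,C,x}_n(y)$ example actually use.
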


\begin{proof}
From Shapiro et al. \cite[Theorem 3.4]{Shapiro}, we know that a matrix is a Riordan array $(g,f)$, where $g\in \F_0$ and $f\in \F_1$, if and only if its bivariate generating function $P(t,x)$ can be written as $g(t)/(1-xf(t))$.

The generating function derived from \eqref{eq:general-recurrence} is

\[
P(t,x)=\frac{p_0(x)+\bigl(p_1(x)-a(x)p_0(x)\bigr)t}{1-a(x)t-b(x)t^2}.
\]

Suppose first that the coefficient matrix is an ordinary Riordan array. Then, from Shapiro et al. \cite[Theorem 3.4]{Shapiro}, there exist formal power series $g(t)$ and $f(t)$, independent of $t$, such that

\[
P(t,x)=\frac{g(t)}{1-xf(t)}.
\]

Consequently, there exists a nonzero formal power series
$\phi(t)$ satisfying

\[
1-a(x)t-b(x)t^2=\phi(t)\bigl(1-xf(t)\bigr).
\]

Since the right-hand side is affine in $x$, the left-hand side must
also be affine in $x$. Therefore

\[
a(x)=a_0+a_1x, \qquad b(x)=b_0+b_1x.
\]

Substituting these expressions yields

\[
1-a(x)t-b(x)t^2=\bigl(1-a_0t-b_0t^2\bigr)\left(1-x\frac{a_1t+b_1t^2}{1-a_0t-b_0t^2}
\right),
\]

so that

\[
\phi(t)=1-a_0t-b_0t^2,
\]

and

\[
f(t)=\frac{a_1t+b_1t^2}{1-a_0t-b_0t^2}.
\]

Next,

\[
p_0(x)+\bigl(p_1(x)-a(x)p_0(x)\bigr)t=g(t)\phi(t).
\]

Since the right-hand side is independent of $x$, differentiating with
respect to $x$ gives

\[
\frac{d}{dx}p_0(x)=0,
\]
and

\[
\frac{d}{dx}\left(p_1(x)-a(x)p_0(x)\right)=0.
\]

Hence

\[
p_0(x)=c
\]
is constant, and

\[
p_1(x)-a(x)p_0(x)=d
\]
is also constant.

Conversely, assume that the three stated conditions hold. Then

\[
1-a(x)t-b(x)t^2=\bigl(1-a_0t-b_0t^2\bigr)\left(1-x\frac{a_1t+b_1t^2}{1-a_0t-b_0t^2}\right),
\]
while

\[
p_0(x)+\bigl(p_1(x)-a(x)p_0(x)\bigr)t=c+dt.
\]
Therefore,

\[
P(t,x)=\frac{c+dt}{1-a_0t-b_0t^2}\cdot\frac{1}{1-x\dfrac{a_1t+b_1t^2}{1-a_0t-b_0t^2}}.
\]
Thus

\[
P(t,x)=\frac{g(t)}{1-xf(t)},
\]
where $g(t)$ and $f(t)$ are given by \eqref{eq:g-characterization} and \eqref{eq:f-characterization}. 
Hence, from Shapiro et al. \cite[Theorem 3.4]{Shapiro}, the coefficient matrix is an ordinary Riordan array. This completes the proof.
\end{proof}

Clearly, the Riordan array representation $(g,f)$ gives the coefficient matrix of the recursive polynomial sequence $\{ p_n(x)\}_{n\geq 0}$. If we apply the FTRA to $(g,f)$ and the generating function $1/(1-xt)$ of the sequence $\{x^n\}_{n\geq 0}$, we obtain the G.F. of $\{p_n (x)\}_{n\geq 0}$. 

\[
(g(t),f(t))\frac{1}{1-xt}=\frac{g(t)}{1-xf(t)}=\sum_{n\geq 0}p_n(x)t^n.
\]
Therefore, from the general Binet formula for second-order linear recurrences, we obtain the following result.

\begin{proposition}\label{pro:RC-2}
Let $\{ p_n(x)\}_{n\geq 0}$ be the recursive polynomial sequence defined in Theorem \ref{thm:Riordan-characterization} with a Riordan array representation $(g,f)$. Then there exists the identity:

\begin{equation}\label{0-0-2}
\left(\frac{c+\bigl(d-ca_0\bigr)t}{1-a_0t-b_0t^2},
\frac{a_1t+b_1t^2}{1-a_0t-b_0t^2}\right) \frac{1}{1-xt}=(p_0(x), p_1(x), \ldots)^T,
\end{equation}
where $c=p_0(x)$, $d=p_1(x)-a(x)p_0(x)$, 

\[
p_n(x)=\left\{ \begin{array}{ll} \left( \frac{p_1(x)-\beta(x) p_0(x)}{\alpha (x)-\beta (x)}\right) \alpha^n(x)- \left(\frac{p_1(x)-\alpha(x) p_0(x)}{\alpha(x)-\beta(x)}\right) \beta^n(x), & if\,\, \alpha(x)\not= \beta(x);\\
np_1(x) \alpha^{n-1}(x)-(n-1)p_0(x)\alpha^{n}(x), &if\,\, \alpha (x)=\beta(x),\end{array}\right.,
\]
and $\alpha(x)$ and $\beta(x)$ are shown below:

\[
\alpha(x)= \frac{1}{2}( a(x)+\sqrt{a^2(x)+4b(x)}), \beta (x)= \frac{1}{2}( a(x)-\sqrt{a^2(x)+4b(x)}).
\]
\end{proposition}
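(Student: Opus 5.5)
The plan has two parts: first establish the matrix identity \eqref{0-0-2} through the fundamental theorem of Riordan arrays (FTRA), then extract the closed form of $p_n(x)$ from the same generating function by partial fractions.

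\textbf{The identity.} By Theorem \ref{thm:Riordan-characterization}, the three conditions give $P(t,x)=g(t)/(1-xf(t))$ with $f(t)=(a_1t+b_1t^2)/(1-a_0t-b_0t^2)$. The FTRA, $(g,f)h(t)=g(t)h(f(t))$, applied to $h(t)=1/(1-xt)$, gives $(g,f)\frac{1}{1-xt}=\frac{g(t)}{1-xf(t)}=\sum_{n\ge0}p_n(x)t^n$. Reading off coefficients of $t^n$ yields \eqref{0-0-2}.

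There is one bookkeeping point I would settle first. The numerator of $g$ must be exactly $p_0(x)+(p_1(x)-a(x)p_0(x))t=c+dt$, since $d$ already contains the term $-a_0c$ (indeed $d=p_1(x)-(a_0+a_1x)c$). I would therefore read the displayed numerator so that its linear coefficient equals $p_1-a p_0$, as in the converse part of the proof of Theorem \ref{thm:Riordan-characterization}. I would check this against the $t^1$ coefficient: $[t^1]g/(1-xf)=g_1+xa_1g_0$ must equal $p_1(x)=d+(a_0+a_1x)c$, and this forces $g_1=d+a_0c$, which is consistent with $g=(c+dt)/(1-a_0t-b_0t^2)$.

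\textbf{The Binet formula.} Write $1-a(x)t-b(x)t^2=(1-\alpha t)(1-\beta t)$, where $\alpha+\beta=a$ and $\alpha\beta=-b$; the given $\alpha,\beta$ are the roots of $z^2-az-b=0$. There are two cases.
\begin{itemize}
\item If $\alpha\ne\beta$, decompose $\frac{p_0+(p_1-ap_0)t}{(1-\alpha t)(1-\beta t)}=\frac{A}{1-\alpha t}+\frac{B}{1-\beta t}$. Comparing constant and $t$ coefficients gives $A+B=p_0$ and $A\beta+B\alpha=ap_0-p_1$. Using $a=\alpha+\beta$, this solves to $A=(p_1-\beta p_0)/(\alpha-\beta)$ and $B=-(p_1-\alpha p_0)/(\alpha-\beta)$. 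Expanding the geometric series then gives the first line of the formula.
\item If $\alpha=\beta$, so that $a=2\alpha$ and $b=-\alpha^2$, the generating function is $(p_0+(p_1-2\alpha p_0)t)\sum_n(n+1)\alpha^nt^n$. Its $t^n$ coefficient is $(n+1)p_0\alpha^n+n(p_1-2\alpha p_0)\alpha^{n-1}=np_1\alpha^{n-1}-(n-1)p_0\alpha^n$, which is the second line.
\end{itemize}
As an alternative check, I could verify both lines by induction directly from \eqref{eq:general-recurrence}, since each is a solution of the recurrence that agrees with $p_0,p_1$ at $n=0,1$.

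\textbf{Main obstacle.} The only real difficulty is the coefficient convention in $g$ discussed above, together with the formal status of $\alpha,\beta$ as algebraic functions of $x$. For the latter, I would work in an algebraic extension of $\mathbb{K}(x)$, or treat the identity pointwise for $x$ with $a^2+4b\ne0$. This is legitimate because all the final expressions are symmetric in $\alpha,\beta$ and therefore are polynomials in $x$.
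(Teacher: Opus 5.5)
Your proposal is correct and takes essentially the paper's route: the identity is the FTRA applied to $1/(1-xt)$ with the $(g,f)$ of Theorem~\ref{thm:Riordan-characterization}, and the closed form is the standard Binet formula, which the paper only cites (its proof body works out just the Gegenbauer--Humbert case $a_0=b_1=d=0$) and which you derive in both cases by partial fractions. Your bookkeeping remark is also right: with $d=p_1(x)-a(x)p_0(x)$, the numerator of $g$ must be $c+dt$, as in the converse part of the proof of Theorem~\ref{thm:Riordan-characterization}, so the displayed $c+(d-ca_0)t$ fails whenever $a_0c\neq 0$ (e.g.\ $a(x)=1+x$, $b(x)=0$, $p_0=1$, $p_1=1+x$ gives $[t^1]\,g/(1-xf)=x$ instead of $1+x$), a discrepancy the paper never sees because its worked case has $a_0=0$.
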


\begin{proof}
From Theorem \ref{thm:Riordan-characterization}, generating function (\ref{eq:10}) of the 
sequence $\{P^{\lambda, C}_n (x,y)\}$ together with Definition \ref{def:5} suggest a presentation of the sequence 
$\{ P^{1,C,y}_n(x)\}_{n\geq 0}$ shown in \eqref{eq:14} by a Riordan array. More precisely, if we assume $C\neq 0$, then 
\[
a(x)=\frac{2x}{C},\,\, b(x)=-\frac{y}{C},\,\, P^{1,C}_0(x,y)=P^{1,C,y}_0(x)=\frac{1}{C},\,\, P^{1,C}_1(x,y)=P^{1,C,y}_1(x)=\frac{2x}{C^2}
\]
implies 

\[
a_0=0,\,\, b_1=0,\,\, c=\frac{1}{C},\,\, d=0.
\]
Thus, from Theorem \ref{thm:Riordan-characterization} and noting $a_0=0$, $b_1=0$, $p_0(x)=1/C$, $p_1(x)=2x/C^2$, and 
$d=0$, the generating function of $P^{1,C,y}_n(x)$ 

\begin{equation}\label{0-0-1}
P(t,x)=\sum_{n\geq 0}P^{1,C,y}_n(x)t^n=\frac{g(t)}{1-xf(t)},
\end{equation}
where 

\[
g(t)=\frac{t}{C+yt^2}\quad f(t)=\frac{2x}{C+yt^2}.
\]
Thus sequence $\{P^{1,C,y}_n(x)\}_{n\geq 0}$ has Riordan array representation 
\begin{equation}
\label{RA1}
(g,f)=\left( \frac{1}{C+yt^2}, \, \frac{2t}{C+yt^2}\right),
\end{equation}
which is a proper Riordan array and begins with  
\begin{equation}
\label{MRA1}
\begin{pmatrix}
\frac{1}{C} & 0 & 0 & 0 & 0 & \cdots \\
0 & \frac{2}{C^2} & 0 & 0 & 0 & \cdots \\
\frac{-y}{C^2} & 0 & \frac{4}{C^3} & 0 & 0 & \cdots \\
0 & \frac{-4y}{C^3} & 0 & \frac{8}{C^4} & 0 & \cdots \\
\frac{y^2}{C^3} & 0 & \frac{-12y}{C^4} & 0 & \frac{16}{C^5} & \cdots \\
\vdots & \vdots & \vdots & \vdots & \vdots & \ddots\\
\end{pmatrix}.
\end{equation}
Since $(g,f)$ is the coefficient matrix of $\{p_n(x)\}_{n\geq 0}$, we get \eqref{0-0-2}. Additionally, the generating function of 
$\{ P_n^{1,C,y}(x)\}$ is 

\begin{equation}\label{P(t,x)}
P(t,x)=\left( \frac{1}{C+yt^2}, \, \frac{2t}{C+yt^2}\right)\frac{1}{1-xt}=\frac{1}{C-2xt+yt^2}.
\end{equation}
\end{proof}

\begin{example}
From Example \ref{ex:3} and using Proposition \ref{pro:RC-2}, we obtain the identity for the Chebyshev polynomials of the second kind: 

\[
\left(\frac{1}{1+t^2},
\frac{2t}{1+t^2}\right) \frac{1}{1-xt}=(U_0(x), U_1(x), \ldots)^T,
\]
where 
\[
U_n(x)= \frac{(x+\sqrt{x^2-1})^{n+1}-(x-\sqrt{x^2-1})^{n+1}}{2\sqrt{x^2-1}}, ~  
n\geq 0,
\]
where $x^2\not= 1$.

Similarly, for Pell polynomial of degree $n+1$: 
\[
P_{n+1}(x)= \frac{(x+\sqrt{x^2+1})^{n+1}-(x-\sqrt{x^2+1})^{n+1}}{2\sqrt{x^2+1}}, ~ 
n\geq 0,
\]
we have the identity 

\[
\left(\frac{1}{1-t^2},
\frac{2t}{1-t^2}\right) \frac{1}{1-xt}=(P_0(x), P_1(x), \ldots)^T.
\]
\end{example}

Some properties of a recursive polynomial sequence can be derived by using the Riordan array representation of the coefficient matrix of a recursive polynomial sequence. For instance, we have the following result. 

\begin{proposition}\label{pro:BC-3}
Let $\{ p_n(x)\}_{n\geq 0}$ be the recursive polynomial sequence defined in Theorem \ref{thm:Riordan-characterization} with a Riordan array representation $(g,f)$. Then,

\[
\sum^n_{k=0}[x^k]p_{2n+1-k}(x)=0,\quad \sum^n_{k=0}[x^k]p_{2n-k}(x)=\frac{(2x-y)^n}{C^{n+1}}.
\]
\end{proposition}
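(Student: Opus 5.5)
We read the statement in the setting of Proposition \ref{pro:RC-2}: $p_n(x)=P_n^{1,C,y}(x)$, whose coefficient matrix is the Riordan array \eqref{RA1}, $(g,f)=\left(\frac{1}{C+yt^2},\frac{2t}{C+yt^2}\right)$, with $d_{n,k}=[x^k]p_n(x)=[t^n]g f^k$. The sums in the statement are rising-diagonal sums of this array. Since the right-hand side depends on $x$, we read them as the weighted sums
\[
S_m(x)=\sum_{k}d_{m-k,k}\,x^k=\sum_k \bigl([x^k]p_{m-k}\bigr)x^k .
\]
Specializing $x=1$ then gives the unweighted diagonal sums $(2-y)^n/C^{n+1}$.

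\textbf{Step 1: a generating function for the weighted diagonal sums.} Reindex to get
\[
\sum_{m\ge0}S_m(x)t^m=\sum_k x^k t^k\sum_{j}d_{j,k}t^j=\sum_k x^k t^k g(t)f(t)^k .
\]
The right-hand side is the fundamental theorem of Riordan arrays applied to $(g,f)$ and the sequence generating function $1/(1-xt)$, evaluated with $t f(t)$ in place of $f(t)$. It therefore equals
\[
\frac{g(t)}{1-x\,t f(t)} .
\]
Equivalently, this is $P(t,x)$ from \eqref{P(t,x)} with $f$ replaced by $tf$. Summing $x^kt^kg f^k$ over $k$ is legitimate because $tf(t)\in\F_2$, so each coefficient of $t^m$ is a finite sum over $k\le m/2$.

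\textbf{Step 2: substitute the explicit $g$ and $f$.} This gives
\[
\frac{1/(C+yt^2)}{1-2xt^2/(C+yt^2)}=\frac{1}{C-(2x-y)t^2}=\frac{1}{C}\sum_{n\ge0}\left(\frac{2x-y}{C}\right)^n t^{2n}.
\]

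\textbf{Step 3: read off coefficients.} The series from Step 2 is even in $t$, so every odd coefficient vanishes: $S_{2n+1}=0$. The coefficient of $t^{2n}$ is $(2x-y)^n/C^{n+1}$. Both claimed identities follow.

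The only delicate point is the interpretation of the sums, that is, whether they carry the weight $x^k$. Once they are written as rising diagonals of the Riordan array, the calculation reduces to the single rational-function simplification in Step 2. The vanishing of the odd sums can be cross-checked against the parity structure visible in \eqref{MRA1}: $d_{n,k}=0$ unless $n\equiv k \pmod 2$, so $d_{2n+1-k,k}$ has $2n+1-k\not\equiv k \pmod 2$ and vanishes term by term.
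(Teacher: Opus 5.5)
Your proof is correct, and it takes a different route from the paper's. Your reading of the sums as weighted diagonal sums $\sum_k d_{m-k,k}x^k$ is what the paper's argument actually proves. The paper builds the weight into the column generating function without saying so, writing $\sum_i d_{i,k}t^i=(2xt)^k/(C+yt^2)^{k+1}$, so its ``$d_{2n-k,k}$'' is really $x^k[x^k]p_{2n-k}(x)$. Your remark that $x=1$ gives the unweighted sums $(2-y)^n/C^{n+1}$ is also right.

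The paper works entry by entry. It extracts $d_{2n-k,k}=(2x)^kC^{-k-1}[t^{2(n-k)}](1+(y/C)t^2)^{-(k+1)}$ via Newton's rule, getting the closed form $\binom{n}{k}(2x)^k(-y)^{n-k}/C^{n+1}$. It then sums with the binomial theorem and leaves the odd case to the reader. You instead take the generating function of all rising diagonals at once, $g(t)/(1-xtf(t))$, which for this array collapses to $1/(C-(2x-y)t^2)$. Both the vanishing of the odd sums and the even formula then fall out together.

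Your version is shorter and handles both identities uniformly. It also applies unchanged to any Riordan array, since the weighted diagonal sums of $(g,f)$ always have generating function $g/(1-xtf)$. The paper's version gives, in addition, an explicit formula for each individual diagonal entry.

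The one point you leave implicit is that your unrestricted sum over $k$ equals the stated sum over $0\le k\le n$. This holds because $d_{m-k,k}=0$ for $k>m/2$, by lower triangularity.
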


\begin{proof}
Denote $(d_{n,k})_{n\geq k\geq 0}=(g,f)$. Since $[x^k]p_{2n+1-k}(x)=d_{2n+1-k,k}$ and $[x^k]p_{2n-k}(x)=d_{2n-k,k}$, we only need to prove for every natural $n\geq 0$, there exist 
\begin{equation}
\label{RisSum}
\sum\limits_{k=0}^{n} d_{2n+1-k,k} =0 \quad \mbox{and} \quad
\sum\limits_{k=0}^{n} d_{2n-k,k}=\frac{(2x-y)^n}{C^{n+1}}.
\end{equation}
We prove the second equality of (\ref{RisSum}), and leave the first one for 
the reader. According to the FTRA, elements of the $k$-th column of the 
Riordan array (\ref{RA1}) are the coefficients of the f.p.s.
$$
\sum\limits_{i=0}^{\infty} d_{i,k} t^i=\frac{(2xt)^k}{(C+yt^2)^{k+1}}.
$$
Therefore, 
$$
d_{2n-k,k}=\bigl[t^{2n-k}\bigr]\frac{(2xt)^k}{(C+yt^2)^{k+1}}= 
\frac{(2x)^k}{C^{k+1}} \bigl[t^{2n-k}\bigr] \frac{t^k}{\bigl(1+(y/C)t^2\bigr)^{k+1}}.
$$
Using the {\sl Newton's rule}
$$
\bigl[t^n \bigr] (1+\alpha t)^r={r \choose n}\alpha^n
$$
together with the basic rules of operation of the operator $[t^n]$, we obtain 
$$
d_{2n-k,k}=\frac{(2x)^k}{C^{k+1}} \bigl[t^{2(n-k)}\bigr] \bigl(1+(y/C)t^2\bigr)^{-(k+1)}
= \frac{(2x)^k}{C^{k+1}} {-(k+1) \choose n-k} \frac{y^{n-k}}{C^{n-k}}
$$
\begin{equation}
\label{DiagEl}
= \frac{(2x)^k y^{n-k}}{C^{n+1}} {n-k+(k+1)-1 \choose n-k}(-1)^{n-k}=
\frac{1}{C^{n+1}} {n \choose k} (2x)^k (-y)^{n-k} .
\end{equation}
Now the second sum of (\ref{RisSum}) follows from the binomial theorem 
$$
\sum\limits_{k=0}^{n} d_{2n-k,k}=\sum\limits_{k=0}^{n} 
\frac{1}{C^{n+1}} {n \choose k} (2x)^k (-y)^{n-k}=\frac{(2x-y)^n}{C^{n+1}}.
$$
\end{proof}

For the polynomial sequence defined in Theorem \ref{thm:Riordan-characterization} with a Riordan array representation $(g,f)$ has the bivariate generating function $P(t,x)=g(t)/(1-xf(t))$, we get 
\[
p_n(x)=[t^n]P(t,x)=[t^n]\frac{g(t)}{1-xf(t)}.
\]

\begin{proposition}\label{pro:BC-4}
Let $\{ p_n(x)\}_{n\geq 0}$ be the recursive polynomial sequence defined in Theorem \ref{thm:Riordan-characterization} with a Riordan array representation $(g,f)$. Then,
\begin{align}
&\frac{d^\ell}{dx^\ell}p_n(1)=[t^n]\frac{g(t)f(t)^{\ell-1}}{\ell ! (1-f(t))^{\ell+1}}\label{0-0-3}\\
&\frac{d^\ell}{dx^\ell}p_n(-1)=[t^n]\frac{g(t)f(t)^{\ell-1}}{\ell !(1+f(t))^{\ell+1}}\label{0-0-4}.
\end{align}
Particularly, 

\begin{align*}
&p_n(1)=[t^n]\frac{g(t)}{1-f(t)},\\
&p_n(-1)=[t^n]\frac{g(t)}{1+f(t)},\\
&p'_n(1)=[t^n]\frac{g(t)f(t)}{(1-f(t))^2}\\
&p'_n(-1)=-[t^n]\frac{g(t)f(t)}{(1+f(t))^2}.
\end{align*}
\end{proposition}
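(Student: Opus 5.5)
The plan is to work entirely with the bivariate generating function supplied by Theorem \ref{thm:Riordan-characterization} and Shapiro et al.\ \cite[Theorem 3.4]{Shapiro}:
\[
\sum_{n\ge0}p_n(x)t^n=\frac{g(t)}{1-xf(t)}=\sum_{k\ge0}g(t)f(t)^k x^k .
\]
Because $f\in\F_1$, the series $g f^k$ has order at least $k$. Hence $[t^n]$ of the right-hand side is the finite sum $\sum_{k=0}^n d_{n,k}x^k=p_n(x)$. Consequently every manipulation in $x$ (differentiation, evaluation at $x=\pm1$) may be done coefficientwise in $t$. This avoids any analytic convergence issue.

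\textbf{Step 1 (differentiation).} Differentiate termwise in $x$:
\[
\frac{d^\ell}{dx^\ell}\sum_{k\ge0} g f^k x^k=\sum_{k\ge \ell}\frac{k!}{(k-\ell)!}\, g f^k x^{k-\ell}=\ell!\,\frac{g(t)f(t)^\ell}{(1-xf(t))^{\ell+1}},
\]
using $\sum_{m\ge0}\binom{m+\ell}{\ell}u^m=(1-u)^{-\ell-1}$ with $u=xf(t)$. Again only finitely many terms contribute to each $[t^n]$.

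\textbf{Step 2 (evaluation).} Set $x=1$ and $x=-1$. Both $1-f(t)$ and $1+f(t)$ have constant term $1$, so they are invertible in $\F$, and the substitution commutes with extracting $[t^n]$. This gives
\[
p_n^{(\ell)}(\pm1)=[t^n]\,\frac{\ell!\,g(t)f(t)^\ell}{(1\mp f(t))^{\ell+1}}.
\]
The particular cases then follow. The case $\ell=0$ gives $p_n(\pm1)=[t^n]\,g/(1\mp f)$. The case $\ell=1$ gives $p_n'(1)=[t^n]\,gf/(1-f)^2$.

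\textbf{Main obstacle: reconciling normalizations.} The formula derived above has the factor $\ell!$ in the numerator and the power $f^{\ell}$. The displayed \eqref{0-0-3}--\eqref{0-0-4} instead have $1/\ell!$ and $f^{\ell-1}$, and \eqref{0-0-4} also omits the sign.

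The $\ell=0,1$ specializations listed in the statement agree with $\ell!\,g f^\ell/(1\mp f)^{\ell+1}$ and not with the displayed general formula. The one exception is the minus sign in $p_n'(-1)$. Direct differentiation of $g/(1+xf)$ at $x=-1$ must instead be redone as differentiation of $g/(1-xf)$ evaluated at $x=-1$, and this yields $+[t^n]\,gf/(1+f)^2$.

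So I would prove the corrected identity $p_n^{(\ell)}(\pm1)=[t^n]\,\ell!\,gf^\ell/(1\mp f)^{\ell+1}$. I would then point out that the coefficient form $\frac{1}{\ell!}p_n^{(\ell)}(\pm1)=[t^n]\,gf^\ell/(1\mp f)^{\ell+1}$ is presumably what the displayed formulas intend. Finally, I would check the sign convention against the Chebyshev case $U_n'(-1)$ before finalizing.
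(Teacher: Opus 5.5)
Your proposal is correct and follows the same route as the paper. The paper's entire proof is the one-line claim $\frac{d^\ell}{dx^\ell}p_n(x)=[t^n]\frac{g(t)f(t)^{\ell-1}}{\ell!\,(1-xf(t))^{\ell+1}}$, followed by setting $x=\pm1$. That claimed identity is wrong, and your $\ell!\,g(t)f(t)^{\ell}/(1-xf(t))^{\ell+1}$ is the right one. Already for $\ell=1$, $n=0$, the paper's version gives $[t^0]\,g/(1-f)^2=g(0)\neq0$, whereas $p_0'(1)=0$ because $p_0$ is constant.

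The sign check you deferred comes out as you predicted. For $U_n$ one has $gf/(1+f)^2=2t/(1+t)^4$, so $[t^1]\,gf/(1+f)^2=2=U_1'(-1)$. This confirms $p_n'(-1)=+[t^n]\,g(t)f(t)/(1+f(t))^2$, so the minus sign in the statement is an error.
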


\begin{proof}
Since 

\[
\frac{d^\ell}{dx^\ell}p_n(x)=[t^n]\frac{g(t)f(t)^{\ell-1}}{\ell ! (1-xf(t))^{\ell+1}},
\]
we immediately obtain the results. 
\end{proof}

We may consider $P^{1,C,y}_n(x)$ as a polynomial of $y$ with respect to the parameters $C$ and $x$, denoted by $\{P^{1,C,x}_n(y)\}$, which satisfies linear recurrence relation 
\be\label{eq:14-y}
P^{1,C,x}_n(y)=\frac{2x}{C}P^{1,C,x}_{n-1}(y)-\frac{y}{C}P^{1,C,x}_{n-2}(y), \quad n\geq 2,
\ee
and 
\begin{equation}\label{0-0-y}
P^{1,C,x}_0(y) =C^{-1},\quad P^{1,C,x}_1(y)=2x C^{-2}. 
\end{equation}

From Theorem \ref{thm:Riordan-characterization}, and noting $a(y)=2x/C$ and $b(y)=-y/C$, which gives 

\begin{align*}
&a_0=\frac{2x}{C}, \,\, a_1=0,\,\, b_0=0, \,\, b_1=-\frac{1}{C}, \\
& p_0(y)=\frac{1}{C},\,\, p_1(y)=\frac{2x}{C^2},\,\, d=p_1(y)-a(y)p_0(y)=0,
\end{align*}
we obtain the Riordan array representation of $\{P^{1,C,x}_n(y)\}$ as

\[
\left( g(t), f(t)\right)=\left( \frac{\frac{1}{C}}{1-\frac{2x}{C}t}, \frac{-\frac{t^2}{C}}{1-\frac{2x}{C}t}\right)=\left(\frac{1}{C-2xt}, \frac{-t^2}{C-2xt}\right).
\]
Therefore, the generating function of $\{P^{1,C,x}_n(y)\}$ is

\[
P(t,y)=\left(\frac{1}{C-2xt}, \frac{-t^2}{C-2xt}\right)\frac{1}{1-yt}=\frac{1}{C-2xt+yt^2},
\]
which equals to $P(t,x)$ shown in \eqref{P(t,x)}.

Yet another way to represent the sequence $\{P^{1, C,x}_n (y)\}_{n\geq 0}$, 
via a Riordan array, is to use powers of $x$ in the coefficients array. For example, 
when $C\neq 0$ Nikolai Krylov suggests to take Riordan array 
\begin{equation}
\label{RA3-2}
\left( \frac{1}{C-2xt}, \, \frac{t}{\sqrt{C-2xt}}\right).
\end{equation}
Indeed, applying the FTRA to the aerated sequence of powers of $-y$ one gets 
\begin{equation}
\label{RA3-2-2}
\left( \frac{1}{C-2xt}, \, \frac{t}{\sqrt{C-2xt}}\right)\cdot \frac{1}{1+yt^2} =
\frac{1}{C-2xt}\cdot \frac{1}{1+\frac{yt^2}{C-2xt}}= \frac{1}{C-2xt+yt^2}.
\end{equation}

\section{Riordan Characterization for Higher-Order Polynomial Recurrences}

In this section, we investigate when the generating function associated with a polynomial sequence satisfying a homogeneous linear recurrence of order $\ell$ can be represented by an ordinary Riordan array.

Let $\{p_n(x)\}_{n\ge0}$ satisfy
\begin{equation}
p_{n+\ell}(x)=\sum_{i=1}^{\ell}a_i(x)p_{n+\ell-i}(x),\qquad n\ge0,
\label{eq:l-recurrence}
\end{equation}
where each $a_i(x)$ is a polynomial in $x$. We call $\{p_n(x)\}_{n\ge0}$ a recursive polynomial sequence (RPS) of order $\ell$. 

Let
\be\label{3.2}
P(t,x)=\sum_{n=0}^{\infty}p_n(x)t^n
\ee
denote the ordinary generating function.

By Theorem~\ref{thm:lth-order-generating-function},
\begin{equation}
P(t,x)=\frac{N(t,x)}{D(t,x)},
\label{eq:GF-general}
\end{equation}
where
\[
D(t,x)=1-\sum_{i=1}^{\ell}a_i(x)t^i,
\]
and
\[
N(t,x)=\sum_{m=0}^{\ell-1}\left(p_m(x)-\sum_{i=1}^{m}a_i(x)p_{m-i}(x)
\right)t^m.
\]

Recall that a pair $(g(t),f(t))$ defines an ordinary Riordan array if
\begin{equation}
g(0)\neq 0, \qquad f(0)=0, \qquad f'(0)\neq 0.
\label{eq:riordan-conditions}
\end{equation}

Its bivariate generating function is
\begin{equation}
P(t,x) =\frac{g(t)}{1-xf(t)}.
\label{eq:riordan-generating-function}
\end{equation}

The following theorem gives the exact criterion for a rational
generating function obtained from a recurrence to have this form.

\begin{theorem}
\label{thm:exact-riordan}
Let
\[
P(t,x)=\frac{N(t,x)}{D(t,x)}
\]
be the generating function of the RPS $\{p_n(x)\}$ given in Theorem \ref{thm:lth-order-generating-function}, with $D(0,x)=1$.

Then the following statements are equivalent.
\begin{enumerate}
\item
There exist formal power series $g(t)$ and $f(t)$, independent of $x$,
such that
\begin{equation}
P(t,x) =\frac{g(t)}{1-xf(t)},
\label{eq:desired-riordan-form}
\end{equation}
where
\[
g(0)\neq 0, \qquad f(0)=0, \qquad f'(0)\neq 0.
\]
\item There exist formal power series
$q(t,x)$, $N_0(t)$, $D_0(t)$, and $D_1(t)$ such that
\begin{align}
N(t,x) &=q(t,x)N_0(t), \label{eq:N-factorization} \\
D(t,x) &= q(t,x)\bigl(D_0(t)+xD_1(t)\bigr),
\label{eq:D-factorization}
\end{align}
where
\[
N_0(0)\neq 0, \qquad D_0(0)\neq 0,
\]
and
\begin{equation}
D_1(0)=0, \qquad D_1'(0)\neq 0.
\label{eq:D1-conditions}
\end{equation}

In this case,
\begin{equation}
g(t)=\frac{N_0(t)}{D_0(t)}
\label{eq:g-formula}
\end{equation}
and
\begin{equation}
f(t)=-\frac{D_1(t)}{D_0(t)}.
\label{eq:f-formula}
\end{equation}
\end{enumerate}
\end{theorem}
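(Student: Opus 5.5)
The plan is to prove the two implications separately, working throughout in the ring $\mathbb{K}[x][\![t]\!]$ of formal power series in $t$ with polynomial coefficients in $x$. We then use the invertibility facts: a series whose constant term (in $t$) is a nonzero scalar is invertible, and in particular $D(t,x)$ is invertible because $D(0,x)=1$.

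\emph{(2)$\Rightarrow$(1).} Assume the factorizations \eqref{eq:N-factorization} and \eqref{eq:D-factorization}. Setting $t=0$ in \eqref{eq:D-factorization} gives $1=q(0,x)\bigl(D_0(0)+xD_1(0)\bigr)=q(0,x)D_0(0)$. Hence $q(0,x)=1/D_0(0)$ is a nonzero constant, so $q$ is invertible and may be cancelled:
\[
P=\frac{qN_0}{q(D_0+xD_1)}=\frac{N_0/D_0}{1-x\,(-D_1/D_0)}.
\]
Now put $g=N_0/D_0$ and $f=-D_1/D_0$. Then $g(0)=N_0(0)/D_0(0)\neq0$, $f(0)=0$, and $f'(0)=-D_1'(0)/D_0(0)\neq0$. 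This yields \eqref{eq:desired-riordan-form} together with the formulas \eqref{eq:g-formula} and \eqref{eq:f-formula}.

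\emph{(1)$\Rightarrow$(2).} Given $g,f$ as in (1), the natural move is to take $D_0:=1$, $D_1:=-f$, $N_0:=g$, and to define $q(t,x):=D(t,x)/(1-xf(t))$. The denominator $1-xf(t)$ has constant term $1$, so this is a legitimate element of $\mathbb{K}[x][\![t]\!]$. Then \eqref{eq:D-factorization} holds by construction. For \eqref{eq:N-factorization}, the identity $N/D=g/(1-xf)$ gives $N=D\,g/(1-xf)=q\,g=qN_0$. The side conditions $N_0(0)=g(0)\neq0$, $D_0(0)=1\neq0$, $D_1(0)=-f(0)=0$ and $D_1'(0)=-f'(0)\neq0$ are all immediate.

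Finally, I will check that the ``in this case'' formulas are independent of which factorization is chosen. Suppose $D_0,D_1,N_0$ come from any valid factorization. The computation in the first direction shows that $g=N_0/D_0$ and $f=-D_1/D_0$ represent $P$. The pair $(g,f)$ representing $P$ is unique: expanding $g/(1-xf)=\sum_k g f^k x^k$, the coefficient of $x^0$ recovers $g$, and the coefficient of $x^1$ recovers $gf$, from which $f$ follows because $g$ is invertible. So the formulas are consistent.

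The main obstacle is only a bookkeeping point. The statement says $q$ is a ``formal power series'', and one must make sure it is allowed to depend on both $t$ and $x$, with coefficients polynomial in $x$, or more generally lying in $\mathbb{K}[\![x]\!]$ or $\mathbb{K}(x)$. With $q$ understood that way, both directions are pure cancellation arguments, resting on the invertibility of series with a nonzero constant term in $t$.
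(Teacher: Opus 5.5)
Your proposal is correct and follows essentially the same route as the paper: for (1)$\Rightarrow$(2) you take $q=D/(1-xf)$, $N_0=g$, $D_0=1$, $D_1=-f$, and for (2)$\Rightarrow$(1) you cancel $q$ and set $g=N_0/D_0$, $f=-D_1/D_0$. You also observe that $q(0,x)=1/D_0(0)$ makes $q$ invertible, which justifies the cancellation the paper performs silently, and you check that $(g,f)$ is uniquely determined by $P$; these add rigor but do not change the argument.
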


\begin{proof}
Suppose first that statement (1) holds. Then
\[
\frac{N(t,x)}{D(t,x)}=\frac{g(t)}{1-xf(t)}.
\]

Define
\[
q(t,x)=\frac{D(t,x)}{1-xf(t)}.
\]

Then
\[
D(t,x)=q(t,x)\bigl(1-xf(t)\bigr)
\]
and
\[
N(t,x)=q(t,x)g(t).
\]

Thus statement (2) holds with
\[
N_0(t)=g(t), \qquad D_0(t)=1, \qquad D_1(t)=-f(t).
\]

Since
\[
g(0)\neq 0, \qquad f(0)=0, \qquad f'(0)\neq 0,
\]
we obtain
\[
N_0(0)\neq 0, \qquad D_0(0)\neq 0, \qquad D_1(0)=0, \qquad D_1'(0)\neq 0.
\]

Therefore statement (2) follows.

Conversely, suppose statement (2) holds. Then
\begin{align*}
P(t,x) &= \frac{q(t,x)N_0(t)} {q(t,x)\bigl(D_0(t)+xD_1(t)\bigr)}\\
&= \frac{N_0(t)} {D_0(t)+xD_1(t)}.
\end{align*}

Since
\[
D_0(0)\neq 0,
\]
the formal power series $D_0(t)$ is invertible. Define
\[
g(t)=
\frac{N_0(t)}{D_0(t)}
\]
and
\[
f(t)=-\frac{D_1(t)}{D_0(t)}.
\]

Then
\[
D_0(t)+xD_1(t)=D_0(t) \left( 1-xf(t)\right).
\]

Consequently,
\begin{align*}
P(t,x) &= \frac{N_0(t)} {D_0(t)(1-xf(t))}\\
&= \frac{g(t)}{1-xf(t)}.
\end{align*}

Furthermore,
\[
g(0)=\frac{N_0(0)}{D_0(0)}
\neq 0.
\]

Since $D_1(0)=0$, we have
\[
f(0)=0.
\]

Finally,
\[
f'(0)=-\frac{D_1'(0)}{D_0(0)} \neq 0,
\]
because $D_1'(0)\neq 0$ and $D_0(0)\neq 0$.

Thus $(g,f)$ is an ordinary Riordan array, and statement (1)
follows.
\end{proof}

\begin{corollary}\label{cor:3.2}
Let
\[
P(t,x)=\frac{N(t,x)}{D(t,x)}
\]
be the generating function of the RPS $\{p_n(x)\}$ given in Theorem~\ref{thm:lth-order-generating-function}, with 
\[
D(0,x)=1.
\]
Then, $\{p_n(x)\}$ has a Riordan representation $(g,f)$ if and only if, after cancellation of a common factor, the  numerator $N(t,x)$ depends only on $t$ and the denominator $D(t,x)$ is affine in $x$. More precisely, if
\[
N(t,x)=q(t,x)N_0(t)
\]
and
\[
D(t,x)=q(t,x)\bigl(D_0(t)+xD_1(t)\bigr),
\]
then
\[
g(t)=\frac{N_0(t)}{D_0(t)}
\]
and
\[
f(t)=-\frac{D_1(t)}{D_0(t)},
\]
where pair $(g,f)$ is an ordinary Riordan array precisely when
\[
g(0)\neq0, \qquad f(0)=0, \qquad f'(0)\neq0.
\]
\end{corollary}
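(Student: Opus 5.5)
This corollary is essentially a restatement of Theorem~\ref{thm:exact-riordan}, so I will derive it from that theorem together with the characterization of Riordan arrays by bivariate generating functions (Shapiro et al.\ \cite[Theorem 3.4]{Shapiro}, already used in the proof of Theorem~\ref{thm:Riordan-characterization}). The first step is to note that ``$\{p_n(x)\}$ has a Riordan representation $(g,f)$'' means its coefficient matrix $(d_{n,k})$ with $p_n(x)=\sum_k d_{n,k}x^k$ is a proper Riordan array. By the cited result, this holds if and only if $P(t,x)=g(t)/(1-xf(t))$ with $g\in\F_0$ and $f\in\F_1$, independent of $x$. This is exactly statement~(1) of Theorem~\ref{thm:exact-riordan}.

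\textbf{Forward direction.} Assuming a Riordan representation, Theorem~\ref{thm:exact-riordan} gives $q(t,x)=D(t,x)/(1-xf(t))$ with $N=qN_0$ and $D=q(D_0+xD_1)$, where $N_0=g$, $D_0=1$ and $D_1=-f$. So after cancelling the common factor $q$, the numerator $N_0$ depends only on $t$ and the denominator $D_0+xD_1$ is affine in $x$. Here I must check that $q$ is a legitimate formal power series. Since $1-xf(t)$ has constant term $1$ in $t$, it is invertible in $\mathbb{K}[x][\![t]\!]$, so $q\in\mathbb{K}[x][\![t]\!]$.

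\textbf{Converse.} Suppose $N=qN_0$ and $D=q(D_0+xD_1)$. Cancelling $q$ gives $P=N_0/(D_0+xD_1)$. Cancellation is valid because $D(0,x)=1$ forces $q(0,x)\,(D_0(0)+xD_1(0))=1$. Hence $q(0,x)$ is a nonzero constant, $q$ is invertible, $D_1(0)=0$, and $D_0(0)\neq0$.

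Factoring $D_0+xD_1=D_0\,(1-xf)$ with $f=-D_1/D_0$ then yields $P=g/(1-xf)$ with $g=N_0/D_0$. This shows the formulas for $g$ and $f$ in every case. The array is a proper Riordan array exactly when $g(0)\neq0$, $f(0)=0$ and $f'(0)\neq0$, which are the stated conditions. Invoking \cite[Theorem 3.4]{Shapiro} once more, this is precisely when the coefficient matrix is the Riordan array $(g,f)$.

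\textbf{Main point of care.} There is no real computational obstacle. The delicate issue is the ring in which the factorization and cancellation take place. I will work throughout in $\mathbb{K}[x][\![t]\!]$, and use the normalization $D(0,x)=1$ to show that the common factor $q$ is a unit. This ensures the ``after cancellation'' phrasing is unambiguous and that $g$ and $f$ are independent of the choice of $q$.
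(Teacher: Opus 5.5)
Your proposal is correct and follows the paper's route. The paper proves the corollary in one line by invoking Theorem~\ref{thm:exact-riordan}, with the Shapiro et al.\ characterization $P(t,x)=g(t)/(1-xf(t))$ implicitly linking ``Riordan representation'' to statement (1), exactly as you do. Your extra care fills in details the paper leaves implicit without changing the argument: you work in $\mathbb{K}[x][\![t]\!]$ and use $D(0,x)=1$ to show that the common factor $q$ is a unit, so $D_0(0)\neq0$ and $f(0)=0$ come for free.
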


\begin{proof} From Theorem \ref{thm:exact-riordan}, the given conditions derive there exist formal power series $g(t)$ and $f(t)$, independent of $x$, such that
\[
P(t,x) =\frac{g(t)}{1-xf(t)},
\]
where
\[
g(0)\neq 0, \qquad f(0)=0, \qquad f'(0)\neq 0.
\]
\end{proof}

A particularly useful sufficient condition for the Riordan representation is that this numerator of $P(t,x)$ be independent of $x$.

\begin{theorem}\label{thm:3.3}
Let
\[
P(t,x)=\frac{N(t,x)}{D(t,x)}
\]
be the generating function of the RPS $\{p_n(x)\}$ given in Theorem~\ref{thm:lth-order-generating-function}, with 
\[
D(0,x)=1.
\]
Suppose that the numerator is independent of $x$,
\[
N(t,x)=N_0(t), \quad N_0(0)=1, 
\]
and the recurrence coefficients of $\{p_n(x)\}$ shown in \eqref{eq:l-recurrence} are affine functions of $x$:
\begin{equation}
a_i(x)=\alpha_i+\beta_i x, \qquad
1\leq i\leq\ell,
\label{eq:affine-recurrence}
\end{equation}
with $\beta_1\not=0$. 
Then, RPS $\{p_n(x)\}$ has a Riordan array representation $(g,f)$, where 

\begin{equation}
f(t)=\frac{ \displaystyle\sum_{i=1}^{\ell}\beta_i t^i}
{\displaystyle 1-\sum_{i=1}^{\ell}\alpha_i t^i}
\label{eq:f-affine}
\end{equation}
with $f(0)=0$ and $f'(0)\not=0$.
\end{theorem}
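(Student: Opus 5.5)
The plan is to reduce to Theorem~\ref{thm:exact-riordan} (or Corollary~\ref{cor:3.2}) with the trivial common factor $q(t,x)=1$. Substituting the affine coefficients \eqref{eq:affine-recurrence} into $D(t,x)=1-\sum_{i=1}^{\ell}a_i(x)t^i$ and splitting by powers of $x$ gives
\[
D(t,x)=\Bigl(1-\sum_{i=1}^{\ell}\alpha_i t^i\Bigr)+x\Bigl(-\sum_{i=1}^{\ell}\beta_i t^i\Bigr)=D_0(t)+xD_1(t).
\]
Here $D_0(t)=1-\sum_i\alpha_i t^i$ and $D_1(t)=-\sum_i\beta_i t^i$. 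Together with the hypothesis $N(t,x)=N_0(t)$, this is exactly the factorization \eqref{eq:N-factorization}--\eqref{eq:D-factorization} with $q\equiv 1$.

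Next I would verify the side conditions of statement (2) of Theorem~\ref{thm:exact-riordan}. We have $N_0(0)=1\neq 0$ by hypothesis and $D_0(0)=1\neq0$, consistent with $D(0,x)=1$. Also $D_1(0)=0$, since the sum starts at $i=1$, and $D_1'(0)=-\beta_1\neq 0$ by the assumption $\beta_1\neq0$. Theorem~\ref{thm:exact-riordan} then yields $P(t,x)=g(t)/(1-xf(t))$ with
\[
g(t)=\frac{N_0(t)}{1-\sum_{i=1}^{\ell}\alpha_i t^i},
\qquad
f(t)=-\frac{D_1(t)}{D_0(t)}=\frac{\sum_{i=1}^{\ell}\beta_i t^i}{1-\sum_{i=1}^{\ell}\alpha_i t^i}.
\]
This $f$ is \eqref{eq:f-affine}. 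Since $D_0$ is invertible in $\mathbb{K}[\![t]\!]$, $f$ is a well-defined power series.

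Finally I would read off $f(0)=0$ and $f'(0)=\beta_1\neq0$, either directly from the expansion $f(t)=\beta_1 t+O(t^2)$ or from the identity $f'(0)=-D_1'(0)/D_0(0)$ in the proof of Theorem~\ref{thm:exact-riordan}. I would also record $g(0)=N_0(0)/D_0(0)=1\neq0$, so $(g,f)$ is a proper Riordan array. By the characterization of Riordan arrays through bivariate generating functions (Shapiro et al., as used in Theorem~\ref{thm:Riordan-characterization}), $(g,f)$ is the coefficient matrix of $\{p_n(x)\}$.

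I expect no real obstacle. The only point needing care is that statement (2) only asks for some $q$, and $q\equiv1$ suffices because $D$ is already affine in $x$ under \eqref{eq:affine-recurrence}. I would also note that $\beta_1\neq0$ is exactly what makes $f\in\F_1$.
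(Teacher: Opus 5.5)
Your proposal is correct and follows the paper's proof. Both substitute the affine coefficients to write $D(t,x)=D_0(t)+xD_1(t)$ with $D_0=1-\sum_i\alpha_i t^i$ and $D_1=-\sum_i\beta_i t^i$, apply Theorem~\ref{thm:exact-riordan}, and read off $g=N_0/D_0$, $f=-D_1/D_0$, $f(0)=0$, $f'(0)=\beta_1\neq 0$ and $g(0)=1$. Your explicit choice $q\equiv 1$ is a slight improvement, because it shows the paper's caveat that numerator and denominator have ``no common factor that changes the reduced denominator'' is not needed.
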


\begin{proof} If the recurrence coefficients of $\{p_n(x)\}$ shown in \eqref{eq:l-recurrence} are affine functions of $x$, $a_i(x)=\alpha_i+\beta_i x$, $1\leq i\leq\ell$. Then the denominator of $P(t,x)$ can be written as 

\begin{align}
D(t,x) &= 1-\sum_{i=1}^{\ell}(\alpha_i+\beta_i x)t^i \nonumber\\
&=1-\sum_{i=1}^{\ell}\alpha_i t^i-x\sum_{i=1}^{\ell}\beta_i t^i.
\label{eq:affine-denominator}
\end{align}

Thus, 
\begin{equation}
D(t,x)=D_0(t)+xD_1(t),
\label{eq:affine-D}
\end{equation}
where
\begin{equation}
D_0(t)=1-\sum_{i=1}^{\ell}\alpha_i t^i
\label{eq:D0}
\end{equation}
and
\begin{equation}
D_1(t)=-\sum_{i=1}^{\ell}\beta_i t^i.
\label{eq:D1}
\end{equation}

If the numerator and denominator have no common factor that changes
the reduced denominator, then Theorem~\ref{thm:exact-riordan} gives
\begin{equation}
f(t)=\frac{ \displaystyle\sum_{i=1}^{\ell}\beta_i t^i}
{\displaystyle 1-\sum_{i=1}^{\ell}\alpha_i t^i}.
\label{eq:f-affine}
\end{equation}

Since
\[
f(t)=\beta_1t+O(t^2),
\]
we have
\begin{equation}
f(0)=0, \qquad f'(0)=\beta_1\neq 0.
\label{eq:f-prime-affine}
\end{equation}

If, in addition, the numerator is independent of $x$,
\[
N(t,x)=N_0(t) 
\]
with $N_0(0)=1$, then
\begin{equation}
g(t)=\frac{N_0(t)}{\displaystyle1-\sum_{i=1}^{\ell}\alpha_i t^i},
\label{eq:g-affine}
\end{equation}
where $g(0)=1$. Therefore, $(g,f)$ is the Riordan array represensentation of $\{p_n(x)\}_{n\geq 0}$. 
\end{proof}

\begin{corollary}\label{cor:3.4}
Let
\[
P(t,x)=\frac{N(t,x)}{D(t,x)}
\]
be the generating function of the RPS $\{p_n(x)\}$ given in Theorem~\ref{thm:lth-order-generating-function}, with 
\[
D(0,x)=1.
\]
Suppose that the recurrence coefficients of $\{p_n(x)\}$ shown in \eqref{eq:l-recurrence} are affine functions of
$x$:
\begin{equation}
a_i(x)=\alpha_i+\beta_i x, \qquad
1\leq i\leq\ell,
\label{eq:affine-recurrence}
\end{equation}
where $\beta_1\not= 0$. 
If, in addition, there exist constants
$c_0\not= 0,c_1,\ldots,c_{\ell-1}$ such that
\begin{equation}
p_j(x)-\sum_{i=1}^{j}
(\alpha_i+\beta_i x)p_{j-i}(x)=c_j,
\qquad
0\leq j\leq\ell-1.
\label{eq:initial-compatibility}
\end{equation}
Then, RPS $\{p_n(x)\}$ has a Riordan array representation $(g,f)$, where 

\be
g(t)=\frac{\displaystyle\sum_{j=0}^{\ell-1}c_jt^j}
{\displaystyle 1-\sum_{i=1}^{\ell}\alpha_i t^i}, \qquad f(t)=\frac{ \displaystyle\sum_{i=1}^{\ell}\beta_i t^i}
{\displaystyle 1-\sum_{i=1}^{\ell}\alpha_i t^i},
\label{eq:g-initial}
\ee
with $g(0)\neq 0$, $f(0)=0$, and $f'(0)\neq 0$.
\end{corollary}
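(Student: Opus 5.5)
The plan is to reduce Corollary \ref{cor:3.4} to Theorem~\ref{thm:lth-order-generating-function} and then to the implication (2)$\Rightarrow$(1) of Theorem~\ref{thm:exact-riordan}, with the trivial factor $q(t,x)\equiv 1$. No cancellation argument is needed, because the hypotheses already put $N$ and $D$ in the required shape.

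First, compute the numerator. Substituting $a_i(x)=\alpha_i+\beta_i x$ into \eqref{eq:lth-numerator}, the coefficient of $t^j$ in $N(t,x)$ for $0\le j\le \ell-1$ is exactly the left-hand side of \eqref{eq:initial-compatibility}. By hypothesis this equals $c_j$, so
\[
N(t,x)=N_0(t):=\sum_{j=0}^{\ell-1}c_jt^j,\qquad N_0(0)=c_0\neq 0 .
\]
This is independent of $x$.

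Second, compute the denominator. As in the proof of Theorem~\ref{thm:3.3}, equation \eqref{eq:affine-denominator} gives $D(t,x)=D_0(t)+xD_1(t)$ with
\[
D_0(t)=1-\sum_{i=1}^{\ell}\alpha_i t^i,\qquad D_1(t)=-\sum_{i=1}^{\ell}\beta_i t^i .
\]
Here $D_0(0)=1\neq0$, $D_1(0)=0$, and $D_1'(0)=-\beta_1\neq 0$ because $\beta_1\neq 0$.

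Third, conclude. Statement (2) of Theorem~\ref{thm:exact-riordan} now holds with $q\equiv1$. That theorem then yields $P(t,x)=g(t)/(1-xf(t))$ with $g=N_0/D_0$ and $f=-D_1/D_0$, which are precisely the formulas in \eqref{eq:g-initial}. The side conditions follow directly: $g(0)=c_0\neq0$, $f(0)=0$, and $f'(0)=\beta_1\neq0$. Finally, Shapiro et al.\ \cite[Theorem 3.4]{Shapiro} identifies the coefficient matrix as the Riordan array $(g,f)$.

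The main obstacle is only bookkeeping. We must check that the indexing of \eqref{eq:initial-compatibility} matches \eqref{eq:lth-numerator} term by term, including the $j=0$ case, where the condition reads $p_0(x)=c_0$. This is straightforward.
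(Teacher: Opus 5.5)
Your proposal is correct and follows essentially the same route as the paper: show that $N(t,x)=\sum_{j=0}^{\ell-1}c_jt^j$ is free of $x$ and that $D(t,x)$ is affine in $x$, then apply the factorization criterion. The one difference is the citation. The paper invokes Theorem~\ref{thm:3.3}, whereas you apply Theorem~\ref{thm:exact-riordan} directly with $q\equiv 1$; this is slightly cleaner, because Theorem~\ref{thm:3.3} is stated under the normalization $N_0(0)=1$ while the corollary assumes only $c_0\neq 0$.
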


\begin{proof} From Theorem \ref{thm:3.3}, and notice 
\[
N(t,x)=\sum_{j=0}^{\ell-1}c_jt^j=:N_0(t),
\]
which is independent of $x$, and 
\[
N_0(0)\not= 0. 
\]
Hence, RPS $\{p_n(x)\}$ has a Riordan array representation $(g,f)$, where $f$ is given in 
Theorem \ref{thm:3.3}, and 
\begin{equation}
g(t)=\frac{\displaystyle\sum_{j=0}^{\ell-1}c_jt^j}
{\displaystyle 1-\sum_{i=1}^{\ell}\alpha_i t^i}.
\label{eq:g-initial}
\end{equation}

Together with
\[
c_0\neq 0 \qquad \beta_1\neq0,
\]
this gives an ordinary Riordan array.
\end{proof}

\begin{example}
As an example, we consider a generalized Tribonacci polynomial sequence with recurrence relation
\be\label{3.3}
p_{n+3}(x)=xp_{n+2}(x)+p_{n+1}(x)+p_n(x), \quad n\geq 0,
\ee
with initials 

\[
p_0(x)=1, \qquad p_1(x)=x, \qquad p_2(x)=x^2+1.
\]
Since 

\[
a_1(x)=1, \qquad a_2(x)=1, \qquad a_3(x)=1.
\]

The first few polynomials are therefore
\begin{align}
p_0(x)&=1,\\
p_1(x)&=x,\\
p_2(x)&=x^2+1,\\
p_3(x) &=xp_2(x)+p_1(x)+p_0(x) =x^3+2x+1,\\
p_4(x) &=xp_3(x)+p_2(x)+p_1(x) =x^4+3x^2+2x+1.
\end{align}

Thus the sequence begins
\[
1,\quad x,\quad x^2+1,\quad x^3+2x+1,\quad x^4+3x^2+2x+1,\ldots
\]

The coefficients in the recurrence are affine functions of $x$. Indeed,
\[
a_i(x)=\alpha_i+\beta_i x, \qquad i=1,2,3,
\]
with
\[
\begin{array}{c|cc}
i & \alpha_i & \beta_i\\
\hline
1 & 0 & 1\\
2 & 1 & 0\\
3 & 1 & 0
\end{array}.
\]

We now examine the compatibility condition
\[
p_j(x) - \sum_{i=1}^{j} (\alpha_i+\beta_i x)p_{j-i}(x) = c_j,\qquad 0\leq j\leq2,
\]
where each $c_j$ is required to be independent of $x$.

For $j=0$, the sum is empty. Therefore,
\[
c_0=p_0(x)=1,
\]
and hence $c_0=1$. 

For $j=1$, we obtain
\begin{align}
p_1(x)-(\alpha_1+\beta_1x)p_0(x)
&=x-x(1)\\
&=0.
\end{align}
Thus, $c_1=0$. 

For $j=2$, we have
\begin{align}
& p_2(x)-(\alpha_1+\beta_1x)p_1(x)-(\alpha_2+\beta_2x)p_0(x)\\
&=(x^2+1)-x(x)-1(1)\\
&=0.
\end{align}

Therefore, $c_2=0$. 

Consequently, the compatibility condition is satisfied with
\[
c_0=1,\qquad
c_1=0,\qquad
c_2=0.
\]

The numerator of the generating function $P(t,x)$ is 

\[
N(t,x)=\sum^2_{j=0}c_jt^j=1,
\]
And the denominator of the generating function $P(t,x)$ can be written as
\[
D(t,x)=1-xt-t^2-t^3=(1-t^2-t^3)-xt.
\]

Therefore,
\begin{align}
P(t,x)&=\frac{1}{(1-t^2-t^3)-xt} \\ &= \frac{1}{1-t^2-t^3} \frac{1} {1-x\displaystyle\frac{t}{1-t^2-t^3}}.
\end{align}

Hence,
\[
P(t,x)=
\frac{g(t)}{1-xf(t)},
\]
where
\[
g(t)=\frac{1}{1-t^2-t^3},
\qquad
f(t)=\frac{t}{1-t^2-t^3}.
\]

Since
\[
g(0)=1\neq 0, \qquad f(0)=0, \qquad f'(0)=1\neq 0,
\]
the pair
\[
(g,f) = \left(\frac{1}{1-t^2-t^3},\frac{t}{1-t^2-t^3}\right)
\]
defines an ordinary Riordan array.

Thus the coefficient array associated with this polynomial sequence is
an ordinary Riordan array $(g,f)=(d_{n,k})_{n,k\geq 0}$, where 

\begin{align*}
d_{n,k} &= [t^n] \frac{1}{1-t^2-t^3} \left( \frac{t}{1-t^2-t^3}\right)^k\\
&= [t^{n-k}] (1-t^2-t^3)^{-(k+1)}.
\end{align*}

Hence the initial portion of the coefficient matrix is
\[
(g,f) =
\begin{pmatrix}
1&0&0&0&0\\
0&1&0&0&0\\
1&0&1&0&0\\
1&2&0&1&0\\
1&2&3&0&1
\end{pmatrix}.
\]
\end{example}

The first few rows can also be obtained directly from the polynomial
sequence. Since
\[
p_n(x)=\sum_{k=0}^{n}d_{n,k}x^k,
\]
we have
\[
\begin{pmatrix}
p_0(x)\\
p_1(x)\\
p_2(x)\\
p_3(x)\\
p_4(x)
\end{pmatrix}
=
\begin{pmatrix}
1\\
x\\
x^2+1\\
x^3+2x+1\\
x^4+3x^2+2x+1
\end{pmatrix}.
\]

\section{Production matrix approach of orthogonal polynomials}

Section 9.1 of \cite{Shapiro} presents a production matrix method for studying the orthogonal polynomial sequences (OPSs)  using Riordan arrays, which requires the coefficients of the three-term recurrence relation of the OPS to be constants. Such OPSs are called Riordan array-type OPSs. Therefore, this method is not applicable to general Gegenbauer-Humbert polynomial sequences. This section presents a production matrix approach applicable to any OPS. And we use Legender polynomials of non-Riordan-type OPSs and Chebyshev polynomials of Riordan-type OPSs as examples to illustrate our method.

Our production matrix representation and approach depends on the following original definition of production matrix of an infinite invertible matrix. 

\begin{definition}\label{def:PM}
Let $A$ be a lower triangular infinite invertible matrix, and let $S$ be an up-shift matrix, 

\[
S=(s_{n,k})_{n,k\ge 0}, \qquad s_{n,k}=\delta_{n,k-1},
\]
where $\delta_{i,k}$ denotes the Kronecker delta, i.e., 

\[
S=
\begin{pmatrix}
0 & 1 & 0 & 0 & 0 & \cdots \\
0 & 0 & 1 & 0 & 0 & \cdots \\
0 & 0 & 0 & 1 & 0 & \cdots \\
0 & 0 & 0 & 0 & 1 & \cdots \\
\vdots & \vdots & \vdots & \vdots & \vdots & \ddots
\end{pmatrix}.
\]
Then the matrix 

\begin{equation}\label{PM}
P_A=A^{-1}SA
\end{equation}
is called the production matrix associated with matrix $A$.
\end{definition}    

\begin{remark}
Note that the matrix $A$ in the definition is not necessarily a lower triangular matrix. Since we are interested in the coefficient matrix $A$ of the polynomial sequence, it must be a lower triangular matrix. For the sake of simplicity, we will only discuss the production matrix of the infinite lower triangular invertible matrix in this article.
\end{remark}

Every OPS $\{p_n(x)\}$ satisfies a standard three-term recurrence
\[
x\,p_n(x)=\alpha_n p_{n+1}(x)+\beta_n p_n(x)+\gamma_n p_{n-1}(x),~ n\geq 1
\]
with constant sequences $\alpha_n,\beta_n,\gamma_n$. Hence, in the $p$-basis 
(i.e. the infinite column $\{p_0(x),\,p_1(x),\,p_2(x),\,\ldots\}^T$) multiplication by 
$x$ is represented by an infinite so-called \emph{Jacobi matrix}, that is 
denoted by $J$:

\begin{equation}\label{Trid}
x\,p(x)=J\,p(x),
\end{equation}
where $J$ is tridiagonal, that has the first few rows as 

\begin{equation}\label{Jacobi}
J=
\begin{pmatrix}
\beta_0 & \alpha_0 & 0 & 0 & 0 & \cdots \\
\gamma_1 & \beta_1 & \alpha_1 & 0 & 0 & \cdots \\
0 & \gamma_2 & \beta_2 & \alpha_2 & 0 & \cdots \\
0 & 0 & \gamma_3 & \beta_3 & \alpha_3 & \cdots \\
\vdots & \vdots & \vdots & \vdots & \ddots & \ddots
\end{pmatrix}.
\end{equation}
If the recurrence coefficients in its three recurrence relations are all constants:

\[
x\,p_n(x)=\alpha p_{n+1}(x)+\beta p_n(x)+\gamma p_{n-1}(x),
\]
or equivalently,

\begin{equation}\label{Trid-2}
x\,p(x)=\tilde J\,p(x),
\end{equation}
where 

\begin{equation}\label{Jacobi-2}
\tilde J=
\begin{pmatrix}
\beta & \alpha & 0 & 0 & 0 & \cdots \\
\gamma & \beta & \alpha & 0 & 0 & \cdots \\
0 & \gamma & \beta & \alpha & 0 & \cdots \\
0 & 0 & \gamma & \beta & \alpha & \cdots \\
\vdots & \vdots & \vdots & \vdots & \ddots & \ddots
\end{pmatrix},
\end{equation}
which has a Riordan representation, while $J$ doesn't have a Riordan representation. All those will be shown below. 

\begin{example}
We can write the recurrence relation \eqref{eq:13} of the generalized 
Gegenbauer-Humbert polynomial sequence 
$\{ P_n^{\lambda,C,y}(x)\}_{n\geq 0}$ in standard form 

\begin{equation}
\label{eq:11-2}
xP_n(x)=\frac{C(n+1)}{2(\lambda +n)} P_{n+1}(x)+\frac{y(2\lambda+n-1)}{2(\lambda +n)}P_{n-1}(x), 
\end{equation}
where the recurrence coefficients are $\alpha_n=C(n+1)/(2(\lambda +n))$, 
$\beta_n=0$, and $\gamma_n=y(2\lambda +n-1)/(2(\lambda +n))$. Hence, the Jacobi matrix is 

\begin{equation}\label{Jacobi-3}
J=
\begin{pmatrix}
0 & \frac{C}{2\lambda} & 0 & 0 & 0 & \cdots \\
\frac{2\lambda y}{2(\lambda +1)} & 0 & \frac{2C}{2(\lambda +1)}& 0 & 0 & \cdots \\
0 & \frac{y(2\lambda +1)}{2(\lambda +2)} & 0 & \frac{3C}{2(\lambda +2)} & 0 & \cdots \\
0 & 0 & \frac{y(2\lambda +2)}{2(\lambda +3)} & 0 & \frac{4C}{2(\lambda+3)} & \cdots \\
\vdots & \vdots & \vdots & \vdots & \ddots & \ddots
\end{pmatrix}.
\end{equation}
If $\lambda =1$, then \eqref{eq:11-2} can be derived as \eqref{eq:14}, 
whose standard form is: 

\begin{equation}\label{eq:14-2}
xP_n(x)=\frac{C}{2}P_{n+1}(x)+\frac{y}{2}P_{n-1}(x),
\end{equation}
where the recurrence coefficients are $\alpha =C/2$, $\beta =0$, and 
$\gamma =y/2$, and the corresponding Jacobi matrix is 

\[
\tilde J=
\begin{pmatrix}
0 & \frac{C}{2} & 0 & 0 & 0 & \cdots \\
\frac{y}{2} & 0 & \frac{C}{2}& 0 & 0 & \cdots \\
0 & \frac{y}{2} & 0 & \frac{C}{2} & 0 & \cdots \\
0 & 0 & \frac{y}{2} & 0 & \frac{C}{2} & \cdots \\
\vdots & \vdots & \vdots & \vdots & \ddots & \ddots
\end{pmatrix}.
\]
\end{example}

If $A$ is the (lower-triangular) coefficient matrix of the OPS $\{p_n(t)\}_{n\geq 0}$, i.e., row $n$ of $A$ contains the 
coefficients of $p_n(t)$ in the monomial basis, then by writing 
\[
p(x) =
\begin{pmatrix}
p_0(x)\\p_1(x)\\p_2(x)\\\vdots
\end{pmatrix}, \qquad
m(x) =
\begin{pmatrix}
1\\ x\\ x^2\\\vdots
\end{pmatrix},
\]
we have 
\begin{equation}\label{Coef}
p(x)=A\,m(x).
\end{equation}
Therefore, the $n$-th row of $A$ contains the coefficients of $p_n(x)$, where 
$p_n(x)$ is in the monomial basis $\{1,x,x^2,\ldots\}^T$. Then we have the 
following result.

\begin{theorem}
\label{thm:OPS}
Let $A$ be the coefficient matrix of OPS $\{ p_n(x)\}$, where $[x^n]p_n(x)\not=0$.
 Then, the inverse matrix $A^{-1}$ of $A$ exists. And the production matrix of 
 $A^{-1}$ is a tridiagonal matrix that represents the three-term recurrence of 
 $\{p_n(t)\}$, i.e., the Jacobi matrix of the recurrence coefficients.  
Furthermore, the first column of $A^{-1}$ represents the moments of $\{p_n(x)\}$.
\end{theorem}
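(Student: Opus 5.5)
Since $A$ is lower triangular with diagonal entries $[x^n]p_n(x)\neq 0$, each finite leading principal block $A_N$ is invertible. Forward substitution then determines $A^{-1}$ row by row: its $(n,k)$ entry depends only on the $(n+1)\times(n+1)$ leading block. So $A^{-1}$ exists, is lower triangular, and its rows give $x^n=\sum_k (A^{-1})_{n,k}\,p_k(x)$, i.e. $m(x)=A^{-1}p(x)$.

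To identify the production matrix, I use Definition~\ref{def:PM} applied to $A^{-1}$:
\[
P_{A^{-1}}=(A^{-1})^{-1}SA^{-1}=ASA^{-1}.
\]
The key observation is that $S\,m(x)=x\,m(x)$, because row $n$ of $S\,m(x)$ is $x^{n+1}$. Combining this with \eqref{Coef} and with $m(x)=A^{-1}p(x)$ gives
\[
ASA^{-1}p(x)=AS\,m(x)=xA\,m(x)=x\,p(x).
\]
The three-term recurrence in the form \eqref{Trid} also gives $x\,p(x)=J\,p(x)$, so $(ASA^{-1}-J)p(x)=0$. Each row of $ASA^{-1}-J$ has finitely many nonzero entries: $ASA^{-1}$ has row $n$ supported in columns $\le n+1$ because $A$ and $A^{-1}$ are lower triangular, and $J$ is tridiagonal. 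The $p_k$ are linearly independent since $\deg p_k=k$. Hence each row vanishes, and $ASA^{-1}=J$, which is tridiagonal with entries $\alpha_n,\beta_n,\gamma_n$. For the $n=0$ row I will use the convention $p_{-1}=0$, so that $xp_0=\alpha_0p_1+\beta_0p_0$. This convention is the only place the recurrence's starting index needs care.

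For the moment statement, I take $\mathcal L$ to be the linear functional with respect to which $\{p_n\}$ is orthogonal, normalized so that $\mathcal L(p_0)\neq 0$ and $\mathcal L(p_k)=0$ for $k\ge1$ (orthogonality against $p_0$, a nonzero constant). Applying $\mathcal L$ to $x^n=\sum_k (A^{-1})_{n,k}p_k(x)$ gives
\[
\mu_n=\mathcal L(x^n)=(A^{-1})_{n,0}\,\mathcal L(p_0).
\]
So the first column of $A^{-1}$ is the moment sequence, up to the normalization $\mathcal L(p_0)=1$ (equivalently $\mu_0=1/(A^{-1})_{0,0}\cdot\ldots$, fixed by choosing the scale of $\mathcal L$).

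I expect the main obstacle to be justification rather than computation. The infinite products $ASA^{-1}$ must be well defined, which holds since all sums are finite by triangularity, and associativity is used in $A(S(A^{-1}p))$. The normalization of the functional must also be stated precisely. I would handle both by working with finite truncations $A_N$, noting that row $n$ of the identity only involves indices up to $n+1$.
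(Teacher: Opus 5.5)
Your proof is correct and follows essentially the paper's argument. The paper derives $AS=JA$ from $AS\,m(x)=JA\,m(x)$, comparing in the monomial basis, while you compare $ASA^{-1}p(x)=J\,p(x)$ in the $p$-basis; for the moments the paper inverts $A\mu=L[p_0]e_0$, whereas you apply $\mathcal L$ to $x^n=\sum_k (A^{-1})_{n,k}p_k$, which is the same computation. Your $\mu_n=(A^{-1})_{n,0}\,\mathcal L(p_0)$ is in fact correct where the paper's intermediate $\mu=\frac{1}{L[p_0]}A^{-1}e_0$ has the factor inverted (harmless once $L[p_0]=1$), but your parenthetical should read $\mu_0=(A^{-1})_{0,0}=1/a_{0,0}$, not $1/(A^{-1})_{0,0}$.
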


\begin{proof}
Let $S$ be the infinite \emph{shift matrix}. Then, 

\begin{equation}\label{Shif}
x\,m(x)=S\,m(x),
\end{equation}
i.e., $S$ sends $x^k$ to $x^{k+1}$.

Combining \eqref{Coef} and \eqref{Shif}, we get a conjugation relation

\begin{equation}\label{Conk}
xp(x)=AS\,m(x).
\end{equation}

Combining \eqref{Trid} and \eqref{Coef}, we get 
\[
xp(x)=Jp(x)=JA\,m(x).
\]
Combining the above equation and \eqref{Conk} yields

\[
AS=JA,
\]
or equivalently,

\begin{equation}\label{Trid-Conk}
J=ASA^{-1}.
\end{equation}

Let $B=A^{-1}$. Then the production matrix $P_B$ of $B$ satisfies 

\[
P_B=B^{-1}SB=ASA^{-1}.
\]
Therefore, we obtain $P_B=J$, which is tridiagonal matrix that represents the 
three-term recurrence coefficients of $\{p_n(t)\}$. 

Let $L$ be the linear functional defining the orthogonality,
with moments
\[
\mu_k=L(x^k),~k\geq 0.
\]
Then for any $n\in {\mathbb N}_0$,
\begin{align*}
L[p_n(x)] = L\!\Big(\sum_k a_{n,k} x^k\Big)= \sum_k a_{n,k} \mu_k.
\end{align*}
In matrix form,
\[
\mbox{if} ~ \mu = 
\begin{pmatrix}
\mu_0\\ \mu_1\\ \mu_2\\ \vdots
\end{pmatrix}, ~~ \mbox{then}~~
A\,\mu =
\begin{pmatrix}
L[p_0]\\L[p_1]\\L[p_2]\\\vdots
\end{pmatrix}.
\]
By orthogonality, $L[p_n]=0$ for $n\ge1$ and $L[p_0]\ne0$, so
\[
A\,\mu=L[p_0]\,e_0 ~~ \mbox{where}~e_0 = \{1,0,0,\ldots\}^T,
\]
hence
\[
\mu=\frac{1}{L[p_0]} A^{-1} e_0.
\]
With the normalization $L[p_0]=1$, this simplifies to
\[
\mu=A^{-1} e_0.
\]
Therefore, the first column of the inverse Riordan array $A^{-1}$
gives the moment sequence of the corresponding orthogonal polynomial system.
\end{proof}

\begin{corollary}\label{cor:GH_Corrected}
Let $\lambda, y \in \mathbb{C}$, $\lambda > 0$, and $C \neq 0$.  
Define the generalized Gegenbauer--Humbert polynomials $\{P_n^{\lambda,C,y}(x)\}_{n\ge0}$ by the generating function
\[
G(x,t) = \sum_{n\ge0}P_n^{\lambda,C,y}(x)\,t^n = (C-2xt+yt^2)^{-\lambda}.
\]
The coefficient matrix $A=(a_{n,k})_{n,k\ge0}$ defined by $P_n^{\lambda,C,y}(x) = \sum_{k=0}^n a_{n,k}x^k$ has entries:
\[
a_{n,k} = 
\begin{cases}
\dfrac{(\lambda)_{\frac{n+k}{2}} 2^k (-y)^{\frac{n-k}{2}}}{k! (\frac{n-k}{2})! C^{\lambda + \frac{n+k}{2}}} & \text{if } n-k \ge 0 \text{ and even}, \\
0 & \text{otherwise}.
\end{cases}
\]
The inverse matrix $A^{-1}=(b_{n,k})_{n,k\ge0}$ exists such that $x^n = \sum_{k=0}^n b_{n,k} P_k^{\lambda,C,y}(x)$, with entries:
\[
b_{n,k} =
\begin{cases}
\dfrac{n! (\lambda+k) y^{\frac{n-k}{2}} C^{\lambda+\frac{n+k}{2}}}{2^n (\frac{n-k}{2})! (\lambda)_{\frac{n+k}{2} + 1}} & \text{if } n-k \ge 0 \text{ and even}, \\
0 & \text{otherwise}.
\end{cases}
\]
\end{corollary}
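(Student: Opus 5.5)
The plan is to reduce both formulas to a single coefficient extraction plus a rescaling to the classical Gegenbauer polynomials $C_n^{\lambda}(u)$, whose generating function is $(1-2ut+t^2)^{-\lambda}$.

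\emph{Step 1: the entries $a_{n,k}$.} I would write
\[
(C-2xt+yt^2)^{-\lambda}=C^{-\lambda}\Bigl(1-\tfrac{2xt-yt^2}{C}\Bigr)^{-\lambda}=\sum_{m\ge0}\frac{(\lambda)_m}{m!\,C^{\lambda+m}}\,(2xt-yt^2)^m .
\]
Expanding $(2xt-yt^2)^m=\sum_j\binom{m}{j}(2x)^j(-y)^{m-j}t^{2m-j}$, the coefficient of $x^kt^n$ forces $j=k$ and $2m-k=n$. Hence $m=(n+k)/2$ and $m-k=(n-k)/2$, and the entry vanishes unless $n-k\ge0$ is even. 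The $m!$ cancels between $(\lambda)_m/m!$ and $\binom{m}{k}$, which gives exactly the stated $a_{n,k}$. Since $a_{n,n}=(\lambda)_n2^n/(n!\,C^{\lambda+n})\neq0$, the matrix $A$ is lower triangular with nonzero diagonal, so $A^{-1}$ exists, in line with Theorem \ref{thm:OPS}.

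\emph{Step 2: rescaling.} Assume first $y\neq0$ and fix a square root $\sqrt{Cy}$. Setting $s=t\sqrt{y/C}$ and $u=x/\sqrt{Cy}$ gives $C-2xt+yt^2=C(1-2us+s^2)$. Therefore
\[
P_n^{\lambda,C,y}(x)=C^{-\lambda}(y/C)^{n/2}\,C_n^{\lambda}\!\bigl(x/\sqrt{Cy}\bigr).
\]
Only even or odd powers of the square root occur in each term, so the final formulas are polynomial in $y$. The case $y=0$ then follows by continuity, or directly, since both matrices become diagonal.

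\emph{Step 3: the entries $b_{n,k}$.} I would invoke the classical inversion formula
\[
u^n=\frac{n!}{2^n}\sum_{j=0}^{\lfloor n/2\rfloor}\frac{\lambda+n-2j}{j!\,(\lambda)_{n-j+1}}\,C^{\lambda}_{n-2j}(u).
\]
Writing $x^n=(Cy)^{n/2}u^n$ and $C_k^\lambda(u)=C^{\lambda}(C/y)^{k/2}P_k^{\lambda,C,y}(x)$ with $k=n-2j$, so that $j=(n-k)/2$ and $n-j+1=(n+k)/2+1$, the powers collect as $C^{\lambda+(n+k)/2}$ and $y^{(n-k)/2}$. This reproduces the stated $b_{n,k}$.

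\emph{Main obstacle.} The hard part is justifying the inversion formula for $u^n$ rather than merely citing it. I would avoid the citation and instead verify $\sum_k a_{n,k}b_{k,r}=\delta_{n,r}$ directly. Put $i=(n-k)/2$, $N=(n-r)/2$, and $k=r+2l$. The powers of $C$ and $y$ cancel identically, and after simplification the claim reduces to
\[
\sum_{l=0}^{N}(-1)^{N-l}\binom{N}{l}\frac{(\lambda)_{r+N+l}\,(\lambda+r)}{(\lambda)_{r+l+1}}=\delta_{N,0}.
\]
The summand is $(\lambda+r)(\lambda+r+l+1)_{N-1}$, which is a polynomial in $l$ of degree $N-1$ when $N\ge1$. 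An $N$-th finite difference of a polynomial of degree less than $N$ vanishes, which settles $N\ge1$. For $N=0$ the sum is $1$ directly. The bookkeeping of factorials in this reduction is the step I expect to need the most care.
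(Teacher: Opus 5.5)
Your proof is correct. For $a_{n,k}$ it is exactly the paper's argument: factor out $C^{-\lambda}$, expand by the binomial series and the binomial theorem, and reindex by $m=(n+k)/2$.

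For $b_{n,k}$ your route is different, and more complete. The paper gives no real derivation, only the remark that the formula follows ``by utilizing the reciprocal relationship of the generating functions and the property of Humbert polynomials,'' followed by an explicit $5\times 5$ display. Your direct check of $\sum_k a_{n,k}b_{k,r}=\delta_{n,r}$ supplies the missing argument and is self-contained. After setting $k=r+2l$, every term carries the same factor $(y/C)^N/N!$, so the powers of $C$ and $y$ factor out rather than cancel. The remaining alternating sum is the $N$-th forward difference of $(\lambda+r)(\lambda+r+l+1)_{N-1}$, which is a polynomial of degree $N-1$ in $l$ and so has vanishing $N$-th difference for $N\ge 1$. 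Since both matrices are lower triangular, $AB=I$ also gives $BA=I$. This matters because $BA=I$ is the form actually asserted, namely $x^n=\sum_k b_{n,k}P_k^{\lambda,C,y}(x)$.

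This verification is uniform in complex $C,y$, including $y=0$. It therefore makes your Steps 2--3 logically unnecessary, along with the separate $y=0$ case. Those steps are still useful as motivation, since they show the statement is the classical Gegenbauer inversion formula in rescaled form. If you keep them, fix the branches consistently by defining $\sqrt{y/C}:=\sqrt{Cy}/C$. Otherwise $2us$ may equal $-2xt/C$, and the identity $C-2xt+yt^2=C(1-2us+s^2)$ fails.

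Your verified formula also exposes an error in the paper's displayed $A^{-1}$. The formula gives $b_{4,2}=\frac{3yC^{\lambda+3}}{2\lambda(\lambda+1)(\lambda+3)}$, while the display lists $\frac{yC^{\lambda+3}}{(\lambda+1)(\lambda+3)}$. The display is the one that is wrong: for $\lambda=C=y=1$ one has $x^4=\tfrac{1}{16}U_4+\tfrac{3}{16}U_2+\tfrac{1}{8}U_0$. So your argument confirms the general formula in the statement, not the paper's table.
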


\begin{proof}
We begin by factoring $C$ out of the generating function:
\begin{align*}
(C-2xt+yt^2)^{-\lambda} &= C^{-\lambda} \left[ 1 - \left( \frac{2x}{C}t - \frac{y}{C}t^2 \right) \right]^{-\lambda} \\
&= C^{-\lambda} \sum_{r=0}^{\infty} \frac{(\lambda)_r}{r!} \left( \frac{2x}{C}t - \frac{y}{C}t^2 \right)^r.
\end{align*}
Applying the binomial theorem to the inner term:
\[
\left( \frac{2x}{C}t - \frac{y}{C}t^2 \right)^r = \sum_{j=0}^r \binom{r}{j} \left( \frac{2x}{C}t \right)^j \left( -\frac{y}{C}t^2 \right)^{r-j} = \sum_{j=0}^r \frac{r!}{j!(r-j)!} \frac{2^j x^j (-y)^{r-j}}{C^r} t^{2r-j}.
\]
Substitute this back into the expansion and let $n = 2r-j$ (the power of $t$) and $k = j$ (the power of $x$). Then $r = \frac{n+k}{2}$ and $r-j = \frac{n-k}{2}$. For $r$ to be an integer, $n-k$ must be even. Summing over $n$ and $k$:
\[
G(x,t) = \sum_{n=0}^{\infty} \sum_{k \in \{n, n-2, \dots\}} \left( \frac{(\lambda)_{\frac{n+k}{2}}}{k! (\frac{n-k}{2})!} \frac{2^k x^k (-y)^{\frac{n-k}{2}}}{C^{\lambda + \frac{n+k}{2}}} \right) t^n.
\]
Extracting the coefficient $a_{n,k}$ yields the stated formula:
\[
a_{n,k} = \frac{(\lambda)_{\frac{n+k}{2}} 2^k (-y)^{\frac{n-k}{2}}}{k! (\frac{n-k}{2})! C^{\lambda + \frac{n+k}{2}}}.
\]
For the first few values:
\[
A = \begin{pmatrix}
C^{-\lambda} & 0 & 0 & 0 & 0 \\
0 & \frac{2\lambda}{C^{\lambda+1}} & 0 & 0 & 0 \\
-\frac{\lambda y}{C^{\lambda+1}} & 0 & \frac{2\lambda(\lambda+1)}{C^{\lambda+2}} & 0 & 0 \\
0 & -\frac{2\lambda(\lambda+1)y}{C^{\lambda+2}} & 0 & \frac{4\lambda(\lambda+1)(\lambda+2)}{3C^{\lambda+3}} & 0 \\
\frac{\lambda(\lambda+1)y^2}{2C^{\lambda+2}} & 0 & -\frac{2\lambda(\lambda+1)(\lambda+2)y}{C^{\lambda+3}} & 0 & \frac{2\lambda(\lambda+1)(\lambda+2)(\lambda+3)}{3C^{\lambda+4}}
\end{pmatrix}.
\]
Note that in the lower-degree terms like $a_{2,0}$, the power of $C$ is $\lambda + \frac{2+0}{2} = \lambda+1$.

The inverse matrix $A^{-1} = (b_{n,k})$ defines the expansion $x^n = \sum_{k=0}^n b_{n,k} P_k^{\lambda,C,y}(x)$. By utilizing the reciprocal relationship of the generating functions and the property of Humbert polynomials, we find:
\[
b_{n,k} = \begin{cases} 
\dfrac{n! (\lambda+k) y^{\frac{n-k}{2}} C^{\lambda+\frac{n+k}{2}}}{2^n (\frac{n-k}{2})! (\lambda)_{\frac{n+k}{2} + 1}} & n-k \ge 0, \text{ even} \\
0 & \text{otherwise}
\end{cases}
\]
The explicit matrix form of $A^{-1}$ is:
\[
A^{-1} = \begin{pmatrix}
C^\lambda & 0 & 0 & 0 & 0 \\
0 & \frac{C^{\lambda+1}}{2\lambda} & 0 & 0 & 0 \\
\frac{y C^{\lambda+1}}{2(\lambda+1)} & 0 & \frac{C^{\lambda+2}}{2\lambda(\lambda+1)} & 0 & 0 \\
0 & \frac{3y C^{\lambda+2}}{4\lambda(\lambda+2)} & 0 & \frac{3C^{\lambda+3}}{4\lambda(\lambda+1)(\lambda+2)} & 0 \\
\frac{3y^2 C^{\lambda+2}}{4(\lambda+1)(\lambda+2)} & 0 & \frac{y C^{\lambda+3}}{(\lambda+1)(\lambda+3)} & 0 & \frac{3C^{\lambda+4}}{2\lambda(\lambda+1)(\lambda+2)(\lambda+3)}
\end{pmatrix}.
\]

The matrix $J = ASA^{-1}$ represents the operator $x$ in the $\{P_n\}$ basis. Using the identity $(C-2xt+yt^2) \frac{\partial G}{\partial t} = \lambda(2x-2yt)G$, we compare coefficients of $t^n$ to find the three-term recurrence:
\[
x P_n(x) = \frac{C(n+1)}{2(\lambda+n)} P_{n+1}(x) + \frac{(2\lambda+n-1)y}{2(\lambda+n)} P_{n-1}(x).
\]
This defines a tridiagonal matrix $J$ with entries $J_{n,n+1} = \frac{C(n+1)}{2(\lambda+n)}$ and $J_{n,n-1} = \frac{(2\lambda+n-1)y}{2(\lambda+n)}$:
\[
J = \begin{pmatrix}
0 & \frac{C}{2\lambda} & 0 & 0 \\
\frac{2\lambda y}{2(\lambda+1)} & 0 & \frac{2C}{2(\lambda+1)} & 0 \\
0 & \frac{(2\lambda+1)y}{2(\lambda+2)} & 0 & \frac{3C}{2(\lambda+2)} \\
0 & 0 & \frac{(2\lambda+2)y}{2(\lambda+3)} & 0
\end{pmatrix}.
\]
\end{proof}

\begin{corollary}\label{cor:OPS}
Suppose the coefficient matrix of the OPS $\{p_n\}_{n\geq 0}$ is a proper Riordan array $A=(g,f)$. Then, the inverse matrix of $A$ is $A^{-1}=(1/g(\bar f), \bar f)$, and the production matrix of $A^{-1}$ is a tridiagonal matrix that is the Jacobi matrix consisting of the recurrence coefficient of $\{ p_n(x)\}_{n\geq 0}$.

Furthermore, the first column of $A^{-1}$ represents the moments of $\{p_n(t)\}$.
\end{corollary}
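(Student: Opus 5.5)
This corollary is Theorem \ref{thm:OPS} specialized to the case where $A$ lies in the Riordan group. I would split the argument into two parts: the Riordan-group facts, which give invertibility and the explicit inverse, and an application of Theorem \ref{thm:OPS}, which gives the Jacobi matrix and the moments.

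\textbf{The inverse.} Write $\bar f$ for the compositional inverse of $f$. It exists because $f\in\F_1$, i.e.\ $f(0)=0$ and $f'(0)\neq0$. I would compute
\[
(g,f)\bigl(1/g(\bar f),\bar f\bigr)
\]
with the multiplication rule \eqref{Proddef}. The second component is $\bar f(f(t))=t$. The first component is $g(t)\cdot 1/g(\bar f(f(t)))=g(t)/g(t)=1$. The product is therefore $(1,t)=I$.

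For this to be a legitimate Riordan array I must check that $1/g(\bar f)\in\F_0$. This holds because $g(\bar f(0))=g(0)\neq0$. Since the group is closed and a left inverse equals the right inverse for lower triangular matrices with nonzero diagonal, this gives $A^{-1}=(1/g(\bar f),\bar f)$. The diagonal of $A$ is $g_0f_1^n\neq0$, which is also what makes $[x^n]p_n\neq0$ in Theorem \ref{thm:OPS}.

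\textbf{The production matrix and the moments.} With $A$ invertible and its diagonal nonzero, the hypotheses of Theorem \ref{thm:OPS} hold, and the theorem yields two conclusions directly:
\[
P_{A^{-1}}=ASA^{-1}=J,
\]
the tridiagonal Jacobi matrix of the recurrence coefficients; and, under the normalization $L[p_0]=1$, the first column $A^{-1}e_0$ is the moment vector $\mu$. The one adjustment is that $L[p_0]=g_0\mu_0$ in general, so the moments are proportional to the first column, with the factor fixed by that normalization.

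\textbf{Consistency with the Riordan production matrix.} I would also note that Theorem \ref{thm:OPS} uses $S=U$, so $P_{A^{-1}}=(A^{-1})^{-1}U(A^{-1})$ agrees with the classical Riordan production matrix \eqref{1.5} of the Riordan array $A^{-1}$. That matrix has column form $(Z,A,tA,\dots)$. Its being tridiagonal therefore forces $A(t)=a_0+a_1t+a_2t^2$ and $Z(t)=z_0+z_1t$. Consequently the Jacobi entries are constant from row $1$ on: $\alpha=a_0$, $\beta=a_1$, $\gamma=a_2$, with possibly different values $\beta_0=z_0$ and $\gamma_1=z_1$ in the first column. This matches the remark around \eqref{Jacobi-2} that Riordan-type OPS have $\tilde J$ structure.

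\textbf{Main obstacle.} The only delicate point is bookkeeping of conventions, namely which matrix's production matrix is meant, $A$ or $A^{-1}$. I would resolve it exactly as in the proof of Theorem \ref{thm:OPS}, via $AS=JA$.
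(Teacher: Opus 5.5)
Your proposal is correct and follows the paper's route. You take the Riordan-group inverse $(1/g(\bar f),\bar f)$, then invoke Theorem \ref{thm:OPS} to get $P_{A^{-1}}=ASA^{-1}=J$, read in its $(\tilde Z,\tilde A,t\tilde A,\dots)$ column form, and identify the first column of $A^{-1}$ with the moments. Your explicit check of the inverse, and your remark that without normalization the moments are only proportional to that column (indeed $\mu=L[p_0]\,A^{-1}e_0$), make the argument slightly more careful than the paper's terse proof.
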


\begin{proof} Clearly, if $A=(g,f)=(a_{n,k})_{n,k\geq 0}$ is a proper Riordan array, then $a_{n,n}\not= 0$. Therefore, the inverse of $A$ is 

\[
A^{-1}=\left( \frac{1}{g(\bar f)}, \bar f\right),
\]
where $\bar f$ is the compositional inverse of $f$. Let $P_{A^{-1}}$ be the production matrix of $A^{-1}$, and let $\tilde A$ and $\tilde Z$ be the $A$- and $Z$-sequences of $A^{-1}$. Then, 

\[
P_{A^{-1}}=\left( \tilde Z, \tilde A, x\tilde A, x^2\tilde A, \ldots\right)=ASA^{-1},
\]
which completes the proof of corollary.
\end{proof}

Corollary \ref{cor:OPS} shows that if $(g,f)$ is the coefficient matrix of an OPS, the production matrix of its inverse $(1/g(\bar f), \bar f )$ is the tridiagonal (Jacobi) matrix encoding the recurrence relation of the OPS, and the first column of the inverse gives the moments.

\begin{example}\label{ex:OPS}
The Legendre polynomials $P_n(x)$ satisfy
\[
(n+1)P_{n+1}(x)=(2n+1)tP_n(x)-nP_{n-1}(x),
\]
or equivalently,
\[
xP_n(x)=\frac{n+1}{2n+1} P_{n+1}(x)+0\cdot P_n(x)+\frac{n}{2n+1} P_{n-1}(x).
\]
Hence, its recurrence relation gives the Jacobi (production) matrix is
\[
J_{L} =
\begin{pmatrix}
0 & 1 & 0 & 0 & \cdots\\
\frac{1}{3} & 0 & \frac{2}{3} & 0 & \cdots\\
0 & \frac{2}{5} & 0 & \frac{3}{5} & \cdots\\
0 & 0 & \frac{3}{7} & 0 & \ddots\\
\vdots & \vdots & \vdots & \ddots & \ddots
\end{pmatrix}.
\]
If $A$ is the coefficient matrix of $P_n(t)$ in the monomial basis, 
then the production matrix of $A^{-1}$ is precisely $J_{L}$.

The moments (with respect to the uniform weight on $[-1,1]$) are
\[
\mu_k =
\begin{cases}
\dfrac{2}{k+1}, & k \text{ even},\\
0, & k \text{ odd}.
\end{cases}
\]
These values form the first column of $A^{-1}$ under the normalization $L[p_0]=1$.

The Chebyshev polynomials of the second kind $U_n(t)$ satisfy
\[
U_{n+1}(t)=2t\,U_n(t)-U_{n-1}(t),
\]
so
\[
t U_n(t)=\frac{1}{2}U_{n+1}(t)+0\cdot U_n(t)+\frac{1}{2}U_{n-1}(t).
\]
Therefore, the Jacobi (production) matrix is
\[
J_{C} =
\begin{pmatrix}
0 & \tfrac{1}{2} & 0 & 0 & \cdots\\
\tfrac{1}{2} & 0 & \tfrac{1}{2} & 0 & \cdots\\
0 & \tfrac{1}{2} & 0 & \tfrac{1}{2} & \cdots\\
0 & 0 & \tfrac{1}{2} & 0 & \ddots\\
\vdots & \vdots & \vdots & \ddots & \ddots
\end{pmatrix}.
\]
This tridiagonal matrix is the production matrix of
the inverse Riordan array $A^{-1}$ corresponding to $(g,f)$
of the $U_n(t)$ system.

The moments for the weight function $\sqrt{1-t^2}$ on $[-1,1]$ are
\[
\mu_k =
\begin{cases}
0, & k \text{ odd},\\[4pt]
\dfrac{\pi}{2^{k+1}}\binom{k}{k/2}, & k \text{ even}.
\end{cases}
\]
These constitute the first column of $A^{-1}$.

Hence, both examples illustrate that the production matrix of the inverse coefficient matrix 
is exactly the Jacobi (tridiagonal) matrix of the OPS, which represents the recurrence relation of the OPS. And the first column of the inverse coefficient matrix encodes the moment sequence associated with the orthogonality measure.
\end{example}

\begin{proposition}\label{pro:OPS}
An OPS $\{ P_n(x)\}_{n\geq 0}$ has a Riordan array type coefficient matrix if and only if its generating function $\sum_{n\geq 0} P_n(x)t^n$ satisfies 

\begin{equation}\label{OPSGF}
\sum_{n\geq 0} P_n(x) t^n=\frac{g(x)}{1-tf(x)}
\end{equation}
for some $g\in \F_0$ and $f\in \F_1$. Therefore, $(g,f)$ is the coefficient matrix of $\{ P_n(x)\}$. 
\end{proposition}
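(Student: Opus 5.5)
We read \eqref{OPSGF} with the roles of the variables as in Theorem~\ref{thm:Riordan-characterization} and Theorem~\ref{thm:exact-riordan}. That is, the intended identity is $\sum_{n\ge0}P_n(x)t^n=g(t)/(1-xf(t))$ with $g\in\F_0$, $f\in\F_1$ formal power series in $t$ independent of $x$. With the letters $x,t$ swapped literally, the right-hand side would not define a coefficient matrix in the monomial basis $\{x^k\}$. The plan is to prove the equivalence directly at the level of column generating functions. This is the content of \cite[Theorem 3.4]{Shapiro}, already invoked in the proof of Theorem~\ref{thm:Riordan-characterization}. The OPS hypothesis enters only to guarantee properness.

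Write the coefficient matrix as $A=(a_{n,k})$ with $P_n(x)=\sum_{k=0}^n a_{n,k}x^k$, as in \eqref{Coef}. First I rearrange the bivariate generating function by columns. This is legitimate in $\mathbb{K}[x][\![t]\!]$, since each coefficient of $t^n$ is a finite sum:
\[
\sum_{n\ge0}P_n(x)t^n=\sum_{k\ge0}x^k\Bigl(\sum_{n\ge k}a_{n,k}t^n\Bigr)=\sum_{k\ge0}x^k A_k(t),
\]
where $A_k(t)$ is the generating function of column $k$.

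For the direction ($\Rightarrow$), suppose $A=(g,f)$ is a Riordan array. Then $A_k=gf^k$ by \eqref{Radef}. Since $f\in\F_1$, the geometric series $\sum_k x^k g f^k$ converges formally in $t$ and equals $g/(1-xf)$.

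For the direction ($\Leftarrow$), suppose the identity holds. Expand $g/(1-xf)=\sum_k x^k g f^k$ in the same way. Because both sides are power series in $t$ whose coefficients are polynomials in $x$, I compare coefficients of $x^k t^n$. This gives $a_{n,k}=[t^n]gf^k$, so $A=(g,f)$.

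It remains to check that $(g,f)$ is proper, i.e.\ $g_0\ne0$ and $f_1\ne0$. Here I use that $\{P_n\}$ is an OPS, so $\deg P_n=n$ exactly and $a_{n,n}\ne0$ (the hypothesis of Theorem~\ref{thm:OPS}). Since $a_{n,n}=[t^n]gf^n=g_0f_1^n$, taking $n=0$ gives $g_0\ne0$, and taking $n=1$ then gives $f_1\ne0$. Conversely, if $g\in\F_0$ and $f\in\F_1$ are given, the diagonal entries $g_0f_1^n$ are nonzero, consistent with Theorem~\ref{thm:OPS}. The final sentence of the proposition, that $(g,f)$ is the coefficient matrix, is exactly the identification $a_{n,k}=[t^n]gf^k$ obtained above.

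The main obstacle is not computational but interpretive. One must fix the correct placement of $x$ and $t$ in \eqref{OPSGF}, and justify the coefficient extraction in $x$ as a formal (not analytic) identity. The latter is where the uniqueness of the expansion in $\mathbb{K}[x][\![t]\!]$ is used.
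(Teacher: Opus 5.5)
Your argument is correct and is essentially the paper's own. The paper simply appeals to the fundamental theorem of Riordan arrays, i.e.\ the column expansion $\sum_{k\ge0}x^k g(t)f(t)^k=g(t)/(1-xf(t))$ that it writes out in Example~\ref{ex:OPS-2} (with $x$ and $t$ interchanged), and you supply the coefficient comparison in $\mathbb{K}[x][\![t]\!]$ and the properness check that the paper leaves implicit. Your reading of \eqref{OPSGF}, with $g,f$ as series in the generating variable, is the right one: read literally, the identity would force every $P_n$ to be a monomial.
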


\begin{proof}
It can be proved using the first fundamental theorem of Riordan arrays.
\end{proof}

\begin{example}\label{ex:OPS-2}
We give an example for illustrating how to determine the existence of the Riordan array type coefficient matrix of an OPS, and how to find a Riordan array as the coefficient matrix for a given OPS, if it exists. First, we consider Chebyshev polynomials of the second kind ($U_n$)

The ordinary generating function in the degree index $n$ for the Chebyshev
polynomials of the second kind $U_n(t)$ is the rational function
\[
\sum_{n\ge 0}U_n(t)x^n=\frac{1}{1-2tx+x^2}.
\]
If we seek an ordinary Riordan representation $A=(g,f)$ whose $n$-th row
lists the coefficients of $U_n(t)$ in the monomial basis, then we need
(the standard Riordan identity)
\begin{align*}
\sum_{n\ge0} p_n(t)\,x^n =& \sum_{n\ge0}\Big(\sum_{k\ge0} a_{n,k} t^k\Big)x^n
= \sum_{k\ge0} t^k \bigg(\sum_{n\ge0} a_{n,k}x^n\bigg)\\
=& \sum_{k\ge0} t^k\, g(x) f(x)^k
= \frac{g(x)}{1-t f(x)} .
\end{align*}
Thus for the Chebyshev polynomial of the second kind $U$, the generating function we require
\[
\frac{g(x)}{1-t f(x)}=\frac{1}{1-2tx+x^2}.
\]
Equating coefficients in powers of $t$ yields the choice
\[
g(x)=\frac{1}{1+x^2},\qquad f(x)=\frac{2x}{1+x^2},
\]
because
\[
\frac{g(x)}{1-t f(x)}=\frac{1/(1+x^2)}{1-t\big(\tfrac{2x}{1+x^2}\big)}
= \frac{1}{1-2 t x+x^2}.
\]

Hence the coefficient matrix $A=(g,f)$ for the Chebyshev polynomials of second kind $U$ 
is the ordinary Riordan array
\[
A=\biggl(\,\frac{1}{1+x^2}\;,\;\frac{2x}{1+x^2}\biggr).
\]

Now we use Legendre polynomial sequence give a counterexample for the existence of Riordan array as the coefficient matrix of an OPS. More precisely, we will show that there does not exists an ordinary Riordan pair $(g,f)$ of the form $\dfrac{g(x)}{1-t f(x)}$ for Legendre polynomial sequence because its recurrence coefficients are non-constant. 

The ordinary generating function for the Legendre polynomials $P_n(t)$
is the algebraic (square-root) function
\[
\sum_{n\ge0} P_n(t)\,x^n=\frac{1}{\sqrt{1-2tx+x^2}}.
\]
If there were an ordinary Riordan pair $(g,f)$ with
$\sum_{n\ge0} P_n(t)\,x^n=g(x)/(1-t f(x))$, then we would have the
identity (for all $t$)
\[
\frac{g(x)}{1-t f(x)}=\frac{1}{\sqrt{1-2tx+x^2}}.
\]
Rewriting,
\[
1-t f(x)=g(x)\sqrt{1-2tx+x^2}.
\]
The left-hand side is a polynomial (affine) in $t$, while the right-hand
side, for nonzero $x$, is a genuine square-root series in $t$ whose
expansion contains infinitely many powers of $t$.  Matching the two sides
term-by-term in powers of $t$ forces a contradiction: one would need a
finite polynomial in $t$ to equal an infinite power series in $t$
unless the square-root series terminates (which it does not for generic
$x$).  Therefore there are no ordinary power-series $g(x)$ and $f(x)$
(independent of $t$) satisfying the required identity.  Equivalently:

Therefore, the Legendre coefficient matrix is not given by an ordinary Riordan
pair $(g,f)$ with $ \sum_{n\ge0} P_n(t)x^n=g(x)/(1-t f(x))$. 

Nevertheless, the Legendre polynomials form an OPS and satisfy the classical
three-term recurrence
\[
(n+1)P_{n+1}(t)=(2n+1)t\,P_n(t)-n P_{n-1}(t),
\]
or equivalently
\[
t\,P_n(t)=\frac{n+1}{2n+1} P_{n+1}(t)+0\cdot P_n(t)+\frac{n}{2n+1} P_{n-1}(t).
\]

Although Legendre polynomial sequence is not a Riordan type OPS, there are several alternative avenues:
First, one can treat their coefficient matrix $A$ directly (it is a lower-triangular matrix with known closed-form entries) and apply the conjugation $J=A S A^{-1}$ to obtain the jacobi matrix. Second, one may consider {\em generalized} (e.g., try exponential) Riordan frameworks (or allow $g,f$ to be algebraic functions involving square roots) in which the Legendre generating function becomes
representable in a Riordan-like form, which we leave for future study. 
\end{example}

\begin{theorem}\label{thm:RACM}
Let $\{P_n(x)\}_{n\geq 0}$ be the OPS satisfying the three-term recurrence relation 

\[
x P_n(x)
= \alpha_n P_{n+1}(x)+\beta_n P_n(x)+\gamma_n P_{n-1}(x),
\qquad n\ge0,
\]
with initial conditions $P_{-1}(x)=0$, $P_0(x)=1$, where $\gamma_n\neq 0$. Then $\{P_n(x)\}_{n\geq 0}$ has a Riordan array type coefficient matrix exists if and only if $\alpha_n$, $\beta_n$, and $\gamma_n\neq 0$ are constants. Let $\alpha_n =\alpha$, $\beta_n=\beta $, and $\gamma_n=\gamma\neq 0$ are constants. Then, the coefficient matrix of $\{ P_n(x)\}_{n\geq 0}$ is a Riordan array presented as 

\begin{equation}\label{RACM}
\left(\frac{\alpha}{\alpha+\beta x+\gamma x^2}, \frac{x}{\alpha +\beta x+\gamma x^2}\right).
\end{equation}
\end{theorem}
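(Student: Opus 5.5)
The plan is to prove the two directions separately. For sufficiency I will use a direct generating-function computation. For necessity I will combine Theorem \ref{thm:OPS} with the $A$/$Z$-sequence description (\ref{eq:1.4-2}) of production matrices of Riordan arrays.

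\emph{Sufficiency and the explicit array.} Assume $\alpha_n=\alpha$, $\beta_n=\beta$, $\gamma_n=\gamma$ for all $n\ge 0$, and put $G(t,x)=\sum_{n\ge0}P_n(x)t^n$. I will multiply $xP_n=\alpha P_{n+1}+\beta P_n+\gamma P_{n-1}$ by $t^{n+1}$ and sum over $n\ge0$, using $P_{-1}=0$ and $P_0=1$. This gives
\[
xtG=\alpha(G-1)+\beta tG+\gamma t^2G ,
\]
hence
\[
G=\frac{\alpha}{\alpha+\beta t+\gamma t^2-xt}=\frac{\alpha/(\alpha+\beta t+\gamma t^2)}{1-x\,t/(\alpha+\beta t+\gamma t^2)} .
\]
Since the OPS has exact degrees, $\alpha\neq0$. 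Then $g=\alpha/(\alpha+\beta t+\gamma t^2)\in\F_0$ and $f=t/(\alpha+\beta t+\gamma t^2)\in\F_1$. Proposition \ref{pro:OPS}, equivalently \cite[Theorem 3.4]{Shapiro} as used in Theorem \ref{thm:Riordan-characterization}, then shows that the coefficient matrix is the Riordan array (\ref{RACM}), written in the paper's convention with $x$ as the series variable. As an alternative check, this is the case $\ell=2$ of Corollary \ref{cor:3.4}, with $a(x)=(x-\beta)/\alpha$ and $b(x)=-\gamma/\alpha$ affine, $c=1$ and $d=0$.

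\emph{Necessity.} Suppose the coefficient matrix is a proper Riordan array $A=(g,f)$. By Theorem \ref{thm:OPS} and Corollary \ref{cor:OPS}, $J=ASA^{-1}$ is the production matrix of the Riordan array $A^{-1}=(1/g(\bar f),\bar f)$. By (\ref{eq:1.4-2}) it therefore has the form $(\tilde Z,\tilde A,t\tilde A,t^2\tilde A,\ldots)$. Comparing this with the tridiagonal form (\ref{Jacobi}) forces $\tilde A(t)=\tilde a_0+\tilde a_1t+\tilde a_2t^2$ and $\tilde Z(t)=z_0+z_1t$. Reading off the diagonals then gives three facts:
\begin{itemize}
\item $\alpha_n=\tilde a_0$ for all $n\ge0$;
\item $\beta_n=\tilde a_1$ for $n\ge1$;
\item $\gamma_n=\tilde a_2$ for $n\ge2$.
\end{itemize}
The condition $\gamma_n\ne0$ gives $\tilde a_2\neq0$.

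\emph{Main obstacle.} The hardest step is the boundary entries $\beta_0=z_0$ and $\gamma_1=z_1$, which come from the $Z$-sequence rather than the $A$-sequence. Toeplitz structure alone does not force them to equal $\tilde a_1$ and $\tilde a_2$. My first attempt will be to exploit the normalization $P_0=1$, $P_{-1}=0$. Rerunning the generating-function computation with $\beta_0$, $\gamma_1$ left free gives
\[
G=\frac{\alpha+(\beta_0-\beta)t+(\gamma_1-\gamma)t^2\,(\cdots)}{\alpha+\beta t+\gamma t^2-xt},
\]
where the correction term involves $P_1$ and is therefore not independent of $x$ when $\gamma_1\neq\gamma$. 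I would then invoke Theorem \ref{thm:Riordan-characterization}, whose condition that $p_1-a(x)p_0$ be constant is the key constraint, to rule out a deviation.

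I expect this to settle $\beta_0=\beta$. For $\gamma_1$, however, the Chebyshev polynomials of the first kind, with $\gamma_1=1\neq\tfrac12=\gamma$ and still a Riordan array, suggest that the statement must be read with ``constant'' meaning constant for $n\ge2$, with $\gamma_1$ absorbed into $g$. This gives $g=(\alpha+(\gamma-\gamma_1)t^2/\alpha\cdots)/(\alpha+\beta t+\gamma t^2)$ and the same $f$. If that happens, I will state this refinement explicitly and prove the formula (\ref{RACM}) in the fully constant case.
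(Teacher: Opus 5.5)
Your sufficiency computation is fine. Your necessity argument reads $J=ASA^{-1}$ as the production matrix $(\tilde Z,\tilde A,t\tilde A,\ldots)$ of $A^{-1}$, which is exactly the route the paper indicates; its proof is only a pointer to Theorem \ref{thm:OPS} plus the $A$- and $Z$-sequences of $A^{-1}$. Your conclusions $\alpha_n=\tilde a_0$ ($n\ge0$), $\beta_n=\tilde a_1$ ($n\ge1$), $\gamma_n=\tilde a_2$ ($n\ge2$) are correct.

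The gap is in the boundary step. Both $\beta_0$ and $\gamma_1$ multiply $P_0=1$, not $P_1$: they occur as $\beta_0P_0$ in $xP_0=\alpha P_1+\beta_0P_0$ and as $\gamma_1P_0$ in $xP_1=\alpha P_2+\beta P_1+\gamma_1P_0$. So your summation actually gives
\[
G(t,x)=\frac{\alpha-(\beta_0-\beta)t-(\gamma_1-\gamma)t^2}{\alpha+\beta t+\gamma t^2-xt},
\]
whose numerator is free of $x$ for every choice of $\beta_0,\gamma_1$. Likewise Theorem \ref{thm:Riordan-characterization} cannot force $\beta_0=\beta$. With $a(x)=(x-\beta)/\alpha$ one gets $p_1-a(x)p_0=(\beta-\beta_0)/\alpha$, which is constant for every $\beta_0$.

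So the step you expect to ``settle $\beta_0=\beta$'' fails, and it must fail. The Chebyshev polynomials of the third kind ($V_0=1$, $V_1=2x-1$, $V_{n+1}=2xV_n-V_{n-1}$) form an OPS with $\beta_0=\tfrac12\neq0=\beta_n$ for $n\ge1$. Yet $\sum_nV_nt^n=(1-t)/(1-2xt+t^2)$, so their coefficient matrix is the Riordan array $\bigl((1-t)/(1+t^2),\,2t/(1+t^2)\bigr)$. Your claim that the $\gamma_1$-correction depends on $x$ also contradicts your own correct observation about $1,2T_1,2T_2,\ldots$.

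Consequently the ``only if'' half of the statement is false as written. The paper's $A$/$Z$ sketch does not rescue it, since $\beta_0=z_0$ and $\gamma_1=z_1$ come from the unconstrained $Z$-sequence. What you can actually prove, in both directions (from the $A$/$Z$ comparison and the displayed $G$), is this: the coefficient matrix is a Riordan array if and only if $\alpha_n$ is constant for $n\ge0$, $\beta_n$ for $n\ge1$, and $\gamma_n$ for $n\ge2$. In that case, in the series variable $t$, it equals
\[
\left(\frac{\alpha-(\beta_0-\beta)t-(\gamma_1-\gamma)t^2}{\alpha+\beta t+\gamma t^2},\ \frac{t}{\alpha+\beta t+\gamma t^2}\right),
\]
and (\ref{RACM}) is the special case $\beta_0=\beta$, $\gamma_1=\gamma$. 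Your refinement should therefore free both boundary entries, not just $\gamma_1$. Your tentative $g$, with the stray $/\alpha$ and the unfinished numerator, should be replaced by the one above.
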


Theorem \ref{thm:RACM} can be proved using a similar argument in the proof of Theorem \ref{thm:OPS} and the $A$-and $Z$-sequences of the inverse of the coefficients matrix of $\{P_n(x)\}_{n\geq 0}$. 

\begin{example}
We give the application of Theorem \ref{thm:RACM} for generalized Gegenbauer--Humbert polynomials. 

Recall the generalized Gegenbauer--Humbert polynomials
$\{P_n(x)=P_n^{\lambda, C,y}(x)$ are defined by the generating function
\[
\sum_{n\ge0} P_n^{\lambda, C,y}(x) t^n = (C-2xt+yt^2)^{-\lambda},
\qquad C\neq0.
\]

They satisfy the three-term recurrence
\[
x P_n
= \frac{n+1}{2(n+\lambda)} P_{n+1}
+ 0\cdot P_n
+ \frac{(n+2\lambda-1)y}{2(n+\lambda)} P_{n-1}.
\]

The associated Jacobi matrix is
\[
J =
\begin{pmatrix}
0 & \frac{1}{2\lambda} & 0 & 0 & \cdots\\
\frac{y}{2(\lambda+1)} & 0 & \frac{2}{2(\lambda+1)} & 0 & \cdots\\
0 & \frac{(2\lambda+1)y}{2(\lambda+2)} & 0 & \frac{3}{2(\lambda+2)} & \cdots\\
\vdots & \vdots & \vdots & \ddots & \ddots
\end{pmatrix}.
\]

If $\lambda =1$, then $J$ is a constant matrix. And the corresponding generalized Gegenbauer-Humbert polynomial sequence has a Riordan array representing its coefficient matrix. From Theorem 
\ref{thm:RACM}, this Riordan array is 
\[
R=\bigl(g(t),f(t)\bigr)=
\Bigl((C+yt^2)^{-1},\;\frac{2t}{C+yt^2}\Bigr),
\]
i.e., its $n$th row gives the coefficients of $P_n^{1, C,y}(x)$ in the monomial basis. We can check directly that
\[
SR=Rk,
\]
so $J$ is the production matrix of $R^{-1}$.

Now, we will discuss the corresponding moments. 

The first column of $R$ is
\[
g(t)=(C+yt^2)^{-1}
= \sum_{n\ge0} \mu_n t^n,
\]
hence
\[
\mu_{2k}=(-1)^k C^{-k-1} y^k,
\qquad
\mu_{2k+1}=0.
\]
These are exactly the moments of the generalized Gegenbauer--Humbert OPS.
\end{example}

\section{$d$-orthogonal polynomial sequences}

\begin{definition}[$d$-orthogonality]
Let $\mathbb{P}$ be the space of polynomials over $\mathbb{C}$, and let
$\mathcal{L}_0,\mathcal{L}_1,\dots,\mathcal{L}_{d-1}$ be $d$ linear functionals on $\mathbb{P}$.

A monic polynomial sequence $\{P_n(x)\}_{n\ge 0}$ is called \emph{$d$-orthogonal}
with respect to the vector functional
\[
\vec{\mathcal{L}} = (\mathcal{L}_0,\dots,\mathcal{L}_{d-1})
\]
if, for every $r=0,1,\dots,d-1$ and all $m\ge 0$,
\[
\langle \mathcal{L}_r,\, x^m P_n(x)\rangle = 0
\quad \text{whenever } n \ge md + r + 1,
\]
and
\[
\langle \mathcal{L}_r,\, x^m P_{md+r}(x)\rangle \neq 0.
\]
\end{definition}

\begin{theorem}[$d$-orthogonal analogue of Theorem~\ref{thm:OPS}]
\label{thm:dOPS}
Let $\{P_n(x)\}_{n\ge0}$ be a $d$-orthogonal polynomial sequence with
$\deg P_n = n$ and leading coefficient $[x^n]P_n(x)\neq 0$.
Let $A=(a_{n,k})_{n,k\ge0}$ be its coefficient matrix defined by
\[
P_n(x) = \sum_{k=0}^n a_{n,k} x^k.
\]
Then the following hold:

\begin{enumerate}
\item[(i)] The matrix $A$ is lower triangular with nonzero diagonal entries,
hence invertible, and $A^{-1}$ exists.

\item[(ii)] The production matrix $P$ of $A^{-1}$,
\[
P = A \, \overline {A^{-1}},
\]
(where $\overline {A^{-1}}$ is $A^{-1}$ with its first row removed),
is a $(d+2)$-banded (lower Hessenberg) matrix, i.e.,
\[
P_{i,j} = 0 \quad \text{for } j > i+1 \ \text{or } i-j > d.
\]

\item[(iii)] The matrix $P$ encodes the $(d+2)$-term recurrence relation of
$\{P_n(x)\}$:
\[
x P_n(x) = P_{n+1}(x)
+ \beta_n P_n(x)
+ \sum_{k=1}^{d} \gamma_{n,k} P_{n-k}(x),
\quad n \ge d,
\]
where the coefficients $\beta_n, \gamma_{n,k}$ are given by the entries of $P$.

\item[(iv)] The first $d$ columns of $A^{-1}$ represent the moments of the
vector functional $(\mathcal{L}_0,\dots,\mathcal{L}_{d-1})$, namely,
\[
(A^{-1})_{n,r} = \langle \mathcal{L}_r, x^n \rangle,
\quad r = 0,1,\dots,d-1,\quad n\ge 0.
\]
\end{enumerate}
\end{theorem}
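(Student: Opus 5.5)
The plan is to follow the proof of Theorem~\ref{thm:OPS} as closely as possible. The one new ingredient is a single family of ``scalar'' functionals, built from $\vec{\mathcal{L}}$, that behaves like a dual basis to $\{P_n\}$.

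\textbf{Step 1: invertibility and the identification of $P$.} Part (i) is immediate. Since $\deg P_n=n$, the matrix $A$ is lower triangular with $a_{n,n}=[x^n]P_n(x)\neq 0$. A lower triangular infinite matrix with nonzero diagonal has a lower triangular two-sided inverse, obtained by forward substitution row by row.

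For (ii) I will first observe that $\overline{A^{-1}}=SA^{-1}$, so $P=ASA^{-1}$; this is the production matrix $P_{A^{-1}}$ of Definition~\ref{def:PM}. As in the proof of Theorem~\ref{thm:OPS}, write $p(x)=A\,m(x)$ and $x\,m(x)=S\,m(x)$. Then $x\,p(x)=ASA^{-1}p(x)=P\,p(x)$, so row $n$ of $P$ lists the coefficients $c_{n,k}$ in
\[
xP_n(x)=\sum_k c_{n,k}P_k(x).
\]
Because $\deg(xP_n)=n+1$, we get $c_{n,k}=0$ for $k>n+1$, which is the Hessenberg half of (ii). Monicity gives $c_{n,n+1}=1$, which is the leading term in (iii).

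\textbf{Step 2: the lower band.} I will introduce the functionals $\Lambda_j(p)=\langle\mathcal{L}_r,x^m p\rangle$ for $j=md+r$ with $0\le r\le d-1$. The definition of $d$-orthogonality says exactly that $\Lambda_j(P_n)=0$ for $n>j$ and $\Lambda_j(P_j)\neq0$.

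Now fix $n$ and take $j=md+r\le n-d-1$. Then
\[
\Lambda_j(xP_n)=\langle \mathcal{L}_r,x^{m+1}P_n\rangle=0,
\]
because $n\ge (m+1)d+r+1$. Substituting the expansion gives
\[
\sum_{k\le j}c_{n,k}\Lambda_j(P_k)=0,
\]
since the terms with $k>j$ vanish. I will then induct on $j=0,1,\dots,n-d-1$. At each stage the earlier $c_{n,k}$ are already zero, and $\Lambda_j(P_j)\ne0$ forces $c_{n,j}=0$. Hence $c_{n,k}=0$ whenever $n-k>d$. This yields the $(d+2)$-band structure and the recurrence (iii), with $\beta_n=c_{n,n}$ and $\gamma_{n,k}=c_{n,n-k}$. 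For $n<d$ the sum is simply truncated at $k=n$.

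\textbf{Step 3: moments, the main obstacle.} For (iv), put $\mu^{(r)}=(\langle\mathcal{L}_r,x^n\rangle)_{n\ge0}$. Exactly as in Theorem~\ref{thm:OPS}, $A\mu^{(r)}=(\langle\mathcal{L}_r,P_n\rangle)_n$. Taking $m=0$ in the definition, this vector vanishes for $n\ge r+1$. So a priori $\mu^{(r)}$ is only a combination of columns $0,\dots,r$ of $A^{-1}$, not column $r$ itself.

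I expect this to be the delicate point, since the statement holds only after a normalization. My plan is to replace $\vec{\mathcal{L}}$ by the canonical vector functional $\mathcal{L}'_r=\sum_{s\le r}\lambda_{r,s}\mathcal{L}_s$. The coefficients are chosen, using the invertible triangular matrix $(\langle\mathcal{L}_s,P_n\rangle)_{0\le s,n\le d-1}$ (its diagonal entries are $\Lambda_s(P_s)\neq0$), so that $\langle\mathcal{L}'_r,P_n\rangle=\delta_{n,r}$ for $0\le n\le d-1$. I will then check that $P_n$ remains $d$-orthogonal with respect to $\vec{\mathcal{L}}'$:
\begin{itemize}
\item the vanishing conditions survive because each $\mathcal{L}_s$ with $s\le r$ satisfies them at least as strongly;
\item the nonvanishing condition survives because $\lambda_{r,r}\neq0$ and the lower-index functionals vanish on $x^mP_{md+r}$.
\end{itemize}
This mirrors the normalization $L[p_0]=1$ used in Theorem~\ref{thm:OPS}. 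With it, $A\mu^{(r)}=e_r$, so $\mu^{(r)}=A^{-1}e_r$, which is column $r$ of $A^{-1}$.
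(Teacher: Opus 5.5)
Your proof is correct. Its skeleton is the paper's: identify $P=A\overline{A^{-1}}=ASA^{-1}$ as multiplication by $x$ in the basis $\{P_n\}$, then read the moments off from $A\mu^{(r)}=(\langle\mathcal{L}_r,P_n\rangle)_{n\ge0}$. You depart from it at the two points where the paper's argument is incomplete.

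For the lower band, the paper argues that for $k<n-d$ one has $\langle\mathcal{L}_r,x^mP_n\rangle=0$ but $\langle\mathcal{L}_r,x^mP_k\rangle\neq0$, ``forcing'' $c_{n,k}=0$. This ignores the extra factor $x$, which shifts $m$ to $m+1$, and it does not control the other terms of the sum. Your functionals $\Lambda_{md+r}=\langle\mathcal{L}_r,x^m\,\cdot\,\rangle$ are triangular against $\{P_k\}$ with nonzero diagonal. Together with induction on $j\le n-d-1$, they make this step rigorous.

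For (iv), the paper claims that the $m=0$ conditions give $\langle\mathcal{L}_r,P_k\rangle=0$ for all $k\neq r$, and it imposes only $\langle\mathcal{L}_r,P_r\rangle=1$. But vanishing is guaranteed only for $k>r$, and that normalization is insufficient once $d\ge2$. Replacing $\mathcal{L}_1$ by $\mathcal{L}_1+\mathcal{L}_0$ preserves $d$-orthogonality and the normalization, since $\langle\mathcal{L}_0,P_1\rangle=0$. It leaves $A$ unchanged, yet changes $\mu^{(1)}$ by $\mu^{(0)}\neq0$.

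Your triangular change $\mathcal{L}'_r=\sum_{s\le r}\lambda_{r,s}\mathcal{L}_s$ is the right repair. Here $(\lambda_{r,s})$ is the inverse of the lower triangular block $(\langle\mathcal{L}_s,P_n\rangle)_{0\le s,n\le d-1}$, and your check that $d$-orthogonality survives is correct.

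Your route gives a complete proof and pins down what (iv) actually says. It holds for the canonically normalized (dual) vector functional. For an arbitrary admissible $\vec{\mathcal{L}}$ you get instead $\mu^{(r)}=\sum_{s\le r}\langle\mathcal{L}_r,P_s\rangle\,A^{-1}e_s$. State explicitly that (iv) is proved under this normalization, since as literally worded it cannot hold for every admissible $\vec{\mathcal{L}}$.
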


\begin{proof} 
Let $\{P_n(x)\}_{n\ge0}$ be a $d$-orthogonal polynomial sequence with respect to the vector functional
\[
\vec{\mathcal{L}} = (\mathcal{L}_0,\dots,\mathcal{L}_{d-1}),
\]
that is, for all $r=0,1,\dots,d-1$ and $m\ge0$,
\begin{equation}
\label{eq:maroni}
\langle \mathcal{L}_r, x^m P_n(x) \rangle = 0
\quad \text{whenever } n \ge md + r + 1,
\end{equation}
and
\[
\langle \mathcal{L}_r, x^m P_{md+r}(x) \rangle \neq 0.
\]

Let the coefficient matrix $A=(a_{n,k})_{n,k\ge0}$ be defined by
\[
P_n(x) = \sum_{k=0}^n a_{n,k} x^k,
\qquad a_{n,n}\neq 0.
\]

Define the infinite column vectors
\[
\mathbf{p}(x) = (P_0(x), P_1(x), P_2(x), \dots)^T,
\qquad
\mathbf{x} = (1, x, x^2, \dots)^T.
\]
Then
\begin{equation}
\label{eq:Ax}
\mathbf{p}(x) = A\,\mathbf{x}, \qquad \mathbf{x} = A^{-1}\mathbf{p}(x).
\end{equation}

Since $\deg P_n = n$ and $a_{n,n}=[x^n]P_n(x)\neq 0$, the matrix $A$ is lower triangular with nonzero diagonal entries. Therefore $A$ is invertible and $A^{-1}$ exists. \hfill $\square$

Let $S=(\delta_{i+1,j})_{i,j\ge0}$ be the shift matrix satisfying
\[
x\,\mathbf{x} = S\,\mathbf{x}.
\]
Using \eqref{eq:Ax}, we compute
\[
x\,\mathbf{p}(x)
= A(x\mathbf{x})
= A S \mathbf{x}
= A S A^{-1} \mathbf{p}(x).
\]
Define
\begin{equation}
\label{eq:Pdef}
P := A S A^{-1}.
\end{equation}
Then $P=A\overline{ A^{-1}}$, where $\overline{A^{-1}}=SA^{-1}$ denotes the matrix obtained from $A^{-1}$ by deleting the first row, and 
\[
x\,\mathbf{p}(x) = P\,\mathbf{p}(x),
\]
so $P$ is the matrix representation of multiplication by $x$ in the basis $\{P_n\}$.

Now, we give the finite-term recurrence from $d$-orthogonality.  Fix $n\ge0$. We claim that $xP_n(x)$ can be expressed as a linear combination of
\[
\{P_{n+1}(x), P_n(x), \dots, P_{n-d}(x)\}.
\]

Indeed, write
\[
xP_n(x) = \sum_{k=0}^{n+1} c_{n,k} P_k(x).
\]
Apply $\mathcal{L}_r(x^m \cdot)$ for arbitrary $r$ and $m$. Using \eqref{eq:maroni}, if $k > n+1$ the coefficient vanishes trivially, while if $k < n-d$ then for suitable $m$ and $r$ one has $k \le md+r$ but $n \ge md+r+1$, hence
\[
\langle \mathcal{L}_r, x^m P_n(x) \rangle = 0
\quad \text{but} \quad
\langle \mathcal{L}_r, x^m P_k(x) \rangle \neq 0,
\]
forcing $c_{n,k}=0$. Thus only the indices
\[
k=n+1,n,n-1,\dots,n-d
\]
can appear, and we obtain the $(d+2)$-term recurrence
\begin{equation}
\label{eq:recurrence}
x P_n(x)
= P_{n+1}(x)
+ \beta_n P_n(x)
+ \sum_{k=1}^d \gamma_{n,k} P_{n-k}(x),
\quad n\ge 0
\end{equation}
with the convention $P_{-1}=P_{-2}=\dots=P_{-d}=0$.

From \eqref{eq:recurrence}, the matrix $P=(P_{i,j})$ satisfies
\[
P_{n,n+1}=1,\quad
P_{n,n}=\beta_n,\quad
P_{n,n-k}=\gamma_{n,k},\ (1\le k\le d),
\]
and all other entries are zero. Therefore
\[
P_{i,j}=0 \quad \text{if } j>i+1 \ \text{or } i-j>d,
\]
i.e., $P$ is $(d+2)$-banded (lower Hessenberg). This proves (ii) and (iii).

To prove (iv), we define the moments
\[
\mu_n^{(r)} := \langle \mathcal{L}_r, x^n \rangle,
\qquad r=0,1,\dots,d-1.
\]

From \eqref{eq:Ax}, we have
\[
x^n = \sum_{k=0}^n (A^{-1})_{n,k} P_k(x).
\]
Applying $\mathcal{L}_r$ gives
\begin{equation}
\label{eq:moments}
\mu_n^{(r)} = \sum_{k=0}^n 
\langle \mathcal{L}_r, P_k(x) \rangle (A^{-1})_{n,k}.
\end{equation}

Define the evaluation matrix
\[
E=(e_{r,k})_{0\le r\le d-1,\,k\ge0}, \qquad
e_{r,k} := \langle \mathcal{L}_r, P_k(x) \rangle.
\]

Then \eqref{eq:moments} becomes
\[
M = E (A^{-1})^T,
\]
where $M=(\mu_n^{(r)})$ is the block moment matrix.

From \eqref{eq:maroni} with $m=0$, for every $n\geq r+1$ we have 
$\langle {\cal L}_r, P_n(x)\rangle=0$; equivalently, $\langle {\cal L}_r, P_k(x)\rangle=0$ for all 
$k\not= r$. Together with the normalization $\langle {\cal L}_r, P_r(x)\rangle=1$, this gives 
$e_{r,k}=\delta_{r,k}$. Substituting into \eqref{eq:moments}, only the $k=r$ term survives:

\[
\mu_n^{(r)}=\sum_{k=0}^n e_{r,k}(A^{-1}){n,k} = (A^{-1}){n,r}.
\]
This proves (iv).

In summary, all four statements (i)--(iv) have been proved, namely, $A$ is invertible; $P=A\overline{A^{-1}}$ is $(d+2)$-banded; 
$P$ encodes the $(d+2)$-term recurrence; and the first $d$ columns of $A^{-1}$ are the moment sequences.

\end{proof}
\begin{example} 
We construct a genuine $2$-orthogonal polynomial sequence whose coefficient
matrix is a generalized (Hessenberg) Riordan array, illustrating
Theorem~\ref{thm:dOPS}.

Define $\{P_n(x)\}_{n\ge0}$ by the $4$-term recurrence:
\begin{equation}
\label{eq:4term}
x P_n(x) = P_{n+1}(x) + 2 P_n(x) + P_{n-1}(x) + P_{n-2}(x), \quad n\ge 0,
\end{equation}
with initial conditions $P_{-1}(x)=P_{-2}(x)=0$, $P_0=1$, and $P_1(x)=x-2$. 

Then:
\begin{align*}
P_2(x) &= x^2 - 4x + 3,\\
P_3(x) &= x^3 - 6x^2 + 10x - 5,\\
P_4(x) &= x^4 - 8x^3 + 21x^2 - 22x + 9.
\end{align*}
This is a genuine $2$-orthogonal system (since the recurrence has $4$ terms).

Write
\[
P_n(x)=\sum_{k=0}^n a_{n,k}x^k.
\]
Then the coefficient matrix $A=(a_{n,k})$ is

\[
A=
\begin{pmatrix}
1 & 0 & 0 & 0 & \cdots \\
-2 & 1 & 0 & 0 & \cdots \\
3 & -4 & 1 & 0 & \cdots \\
-5& 10 & -6 & 1 & \cdots \\
9 & -22& 21 & -8 & 1 & \cdots
\end{pmatrix}.
\]

$A$ is lower triangular and with two nonzero subdiagonals $\Rightarrow$ a \emph{2-Hessenberg Riordan-type array}.

Thus it is a natural extension of Riordan arrays to $d=2$.

The inverse matrix $A^{-1}$ exists and is lower triangular:
\[
A^{-1}=
\begin{pmatrix}
1 & 0 & 0 & 0 & \cdots \\
2 & 1 & 0 & 0 & \cdots \\
5 & 4 & 1 & 0 & \cdots \\
15 & 14 & 6 & 1 & \cdots \\
50 & 50 & 27 & 8 & 1 & \cdots
\end{pmatrix}.
\]

The first two columns give the moment sequences:
\[
\mu_n^{(0)}=(1,2,5,15,50,\dots),
\qquad
\mu_n^{(1)}=(0,1,4,14,50,\dots).
\]
Thus
\[
(A^{-1})_{n,0}=\langle \mathcal{L}_0,x^n\rangle,
\quad
(A^{-1})_{n,1}=\langle \mathcal{L}_1,x^n\rangle.
\]

The corresponding production matrix is
\[
P = A\overline{A^{-1}}.
\]

From the recurrence \eqref{eq:4term}, we obtain
\[
P=
\begin{pmatrix}
2 & 1 & 0 & 0 & \cdots \\
1 & 2 & 1 & 0 & \cdots \\
1 & 1 & 2 & 1 & \cdots \\
0 & 1 & 1 & 2 & \cdots \\
0 & 0 & 1 & 1 & \cdots
\end{pmatrix}, 
\]
where 
\[
P_{i,j}=0 \quad \text{for } j>i+1 \ \text{or } i-j>2,
\]
so $P$ is $4$-banded, exactly as required for $d=2$.

It is clear that $A$ is lower triangular with nonzero diagonal $\Rightarrow$ invertible, $P=A\overline{A^{-1}}$ is $4$-banded, $P$ encodes the recurrence \eqref{eq:4term}, and the first two columns of $A^{-1}$ give the moment sequences.

This example is a natural generalization of a Riordan array due to that classical Riordan arrays $\Rightarrow$ tridiagonal production matrix ($d=1$), while $2$-Hessenberg Riordan-type arrays $\Rightarrow$ $4$-banded production matrix ($d=2$).

Therefore, $d$-orthogonality corresponds to $(d+1)$-Hessenberg (generalized Riordan) arrays.
\end{example}

\noindent{\bf Acknowledgments} 
The author gratefully acknowledges Nikolai Krylov for his comments and suggestions.

\end{document}